\theoremstyle{plain}
\newtheorem{theorem}{Theorem}
\newtheorem{lemma}[theorem]{Lemma}
\newtheorem{corollary}[theorem]{Corollary}
\newtheorem{proposition}[theorem]{Proposition}
\theoremstyle{definition}
\theoremstyle{remark}
\date{}
\title{\bf Self-reciprocal polynomials connecting unsigned and signed relative derangements}
\author{Ricky X. F. Chen, Yu-Chen Ruan\\
	\small School of Mathematics, Hefei University of Technology\\[-0.8ex]
	\small Hefei, Anhui 230601, P.~R.~China\\[-0.8ex]
	\small\tt xiaofengchen@hfut.edu.cn, 1059568476@qq.com}
\begin{document}
	\maketitle
	\begin{abstract}
		In this paper, we introduce polynomials (in $t$)
		of signed relative derangements that track the number of signed elements.
		The polynomials are clearly seen to be in a sense symmetric.
		Note that relative derangements are those without any signed elements, i.e.,
		the evaluations of the polynomials at $t=0$.
		Also, the numbers of all signed relative derangements are given by the evaluations at $t=1$.
		Then the coefficients of the polynomials connect unsigned and signed relative derangements and reveal how putting elements with signs
		affects the formation of derangements.
		We first prove a recursion satisfied by these polynomials which results in a recursion satisfied by
		the coefficients. A combinatorial proof of the latter is provided next.
		We also show that the sequences of the coefficients are unimodal.
		Moreover, other results are obtained, for instance, a kind of dual of a relation between signed derangements and signed relative derangements previously proved by
		Chen and Zhang is presented.
		
		\bigskip\noindent \textbf{Keywords:}  Derangements, Relative derangements, Polynomials, Unimodal
		
		{\noindent\bf Mathematics Subject Classifications:} 05C05, 05A19, 05A15
	\end{abstract}

	\vskip 10pt
	
	\section{Introduction}
	A derangement on a set $[n]=\{1,2,\ldots,n\}$ is a permutation $\pi=\pi_1\pi_2\cdots\pi_n$ on $[n]$ such that $\pi_i\neq i$ for all $i \in [n]$, i.e.,
	a permutation without fixed points. We use $\mathbb{D}_n$ to denote the set of derangements on $[n]$ and $D_n$ to denote the number of derangements on $[n]$.
	The study of derangements may date back to Euler who showed that the probability for a random permutation to be a derangement tends
	to $1/e$. It is also well known (e.g., Stanley~\cite[Chapter~$2$]{ref8}) that
	\begin{align}
		D_n=(n-1)(D_{n-1}+D_{n-2})\label{eq:a}.
	\end{align}
	A relative derangement $\pi=\pi_1\pi_2\cdots\pi_n$ on $[n]$ is a permutation such that $\pi_{i+1}\neq \pi_i+1$ for $1\leq i\leq n-1$.
	Let	$\mathbb{Q}_n$ denote the set of relative derangements on $[n]$ and $Q_n=|\mathbb{Q}_n|$.
	With the aid of the notion of skew derangements, Chen~\cite{ref4} combinatorially showed that
	\begin{align}
		Q_n=D_n + D_{n-1}\label{eq:b}.
	\end{align}
	
	A signed permutation $\pi$ on $[n]$ can be viewed as a bijection on the set $[n] \bigcup \{\overline{1},\ldots,\overline {n}\}$ such that $\pi(\overline {i})=\overline{\pi(i)}$, where $\overline{\overline{j}}=j$. Intuitively, a signed permutation on $[n]$ is just an ordinary permutation $\pi=\pi_1\pi_2\cdots\pi_n$ with some elements associated with a bar. For example, $\overline{1} 3 4 \overline{2}$ is a signed permutation on $\{1,2,3,4\}$.
	These elements with a bar are called signed elements or bar-elements.
	The set of signed permutation on $[n]$ is often denoted by $B_n$.
	A signed derangement (see e.g.~\cite{ref1}) on $[n]$ is a signed permutation $\pi=\pi_1\pi_2\cdots\pi_n$ such that $\pi_i\neq i$, for all $i \in [n]$. For example, $\overline{1} 3 4 \overline{2}$ is a signed derangement in $B_4$, whereas $1 3 4 \overline{2}$ is not since it has a fixed point $1$.
	A signed relative derangement (or sometimes called relative derangement of type $B$, see~\cite{ref5}) on $[n]$ is a signed permutation on $[n]$ such that $i$ is not followed by $i+1$, and $\overline i$ is not followed by $\overline{i+1}$.
	For example, $\overline{1} 3 \overline{2} 4$ is a signed relative derangement.
	We denote by $\mathbb{D}_n^B$ and $\mathbb{Q}_n^B$
	the sets of signed derangements and signed relative derangements on $[n]$, respectively.
	Let $D_n^B=|\mathbb{D}_n^B|$ and $Q_n^B=|\mathbb{Q}_n^B|$.
	Making use of the notion of signed skew derangements, Chen and Zhang~\cite{ref5} proved that 
	\begin{align}
		Q_n^B=D_n^B+D_{n-1}^B\label{eq:c}.
	\end{align}
	One of our results in this paper is a kind of dual of this relation, that is, we present a relation expressing $D_n^B$
	in terms of $f_n$ that counts an essential subset of sequences in $\mathbb{Q}_n^B$.
	
	Obviously, the subset of sequences with zero signed elements is $\mathbb{Q}_n$ and hence $\mathbb{Q}_n \subset \mathbb{Q}_n^B$.
	It is natural to consider the subset consisting of sequences with $m$ signed elements.
	As such, a polynomial $Q_n^B(t)$ tracking the number of signed elements is introduced.
	While many polynomials or $q$-analogues associated to derangements have been studied, for instance, the $q$-enumeration of derangements in $B_n$ by flag major index~\cite{ref1},
	the excedances of derangements~\cite{ref6,ref10}, the $q$-enumeration of derangements by major index~\cite{ref9}, and the cyclic polynomials of derangements~\cite{ref7}, our polynomials here seem to have been overlooked.
	In addition, our polynomials have a nice property, namely, they are in a sense symmetric.

	The paper is organized as follows.
	In Section~\ref{Sec2}, we introduce the symmetric polynomials $Q_n^B(t)$ and prove a recursion satisfied by them.
	Various results are then derived as a consequence.
	For instance, we obtain the expectation and variance of the number of signed elements
	contained in a random signed relative derangement.
	We also derive a partial differential equation satisfied by the generating
	function of $Q_n^B(t)$.  
	Section~\ref{Sec3} is devoted to presenting a combinatorial proof of the resulting recursion satisfied by the coefficients as well as proving a unimodality
	property.

	\section{Symmetric polynomials }\label{Sec2}

	Let $b(\pi)$ be the number of signed elements in $\pi \in\mathbb{Q}_n^{B}$. The polynomial of signed relative derangements recording the number of signed elements is then given by
	$$
	Q_n^{B}(t)=\sum_{\pi \in \mathbb{Q}_n^B} t^{b(\pi)}= \sum_{m=0}^n q_{n,m} t^m,
	$$
	where $q_{n,m}$ denotes the number of signed relative derangements with exactly $m$ signed elements.

	It is evident that $q_{n,m}=q_{n,n-m}$ as we can obtain a signed relative derangment with $n-m$ bar-elements by
	turning a signed element into its unsigned counterpart and vice versa. Therefore, the polynomial $Q_{n}^B(t)$ is self-reciprocal.
	
	Denote by $\widetilde{\mathbb{Q}}_n^B$ the set of signed permutations on the set $[n] $
	where in each signed permutation two consecutive entries of the form $i(i+1)$ or $\overline{i} (\overline{i+1})$ for some $1\leq i <n-1$ appears exactly once.
	For example, $\overline{4} 2 3 \overline{1} \in \widetilde{\mathbb{Q}}_4^B$.
	
	For $\pi \in \mathbb{Q}_n^{B}$, we denote the resulting sequence from removing $n$ or $\overline{n}$ whichever appears in $\pi$ by $\pi^{\downarrow}$.
	The following lemma should not be hard to observe.
	\begin{lemma}
		For any $\pi \in \mathbb{Q}_n^{B}$, we have either
		$\pi^{\downarrow} \in \mathbb{Q}_{n-1}^{B}$ or $\pi^{\downarrow} \in \widetilde{\mathbb{Q}}_{n-1}^{B}$.
	\end{lemma}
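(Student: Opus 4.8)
The plan is to exploit the fact that deleting a single entry from a sequence is an extremely local operation: it alters the adjacency structure only in the immediate vicinity of the deleted entry. First I would record the trivial bookkeeping point that $\pi^{\downarrow}$ is itself a signed permutation on $[n-1]$, since removing the unique entry of absolute value $n$ leaves exactly one entry of each absolute value in $\{1,\ldots,n-1\}$, each still carrying its original sign. Thus it only remains to count how many forbidden pairs of the form $j(j+1)$ or $\overline{j}\,\overline{j+1}$ occur in $\pi^{\downarrow}$.

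Next I would locate the entry $x\in\{n,\overline{n}\}$ inside $\pi$. If $x$ sits at an end of $\pi$, then deleting it destroys one adjacency and creates none, so every adjacent pair of $\pi^{\downarrow}$ already occurred in $\pi$; since $\pi\in\mathbb{Q}_n^{B}$ contains no forbidden pair, neither does $\pi^{\downarrow}$, and hence $\pi^{\downarrow}\in\mathbb{Q}_{n-1}^{B}$. If $x$ is interior, write its left and right neighbors as $a$ and $b$; deleting $x$ removes precisely the two pairs $(a,x)$ and $(x,b)$ and introduces precisely the single new pair $(a,b)$, while leaving all remaining adjacencies untouched. Consequently the only adjacent pair of $\pi^{\downarrow}$ that was not already an adjacent pair of $\pi$ is $(a,b)$, so $\pi^{\downarrow}$ can contain at most one forbidden pair, namely $(a,b)$. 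If $(a,b)$ is not forbidden then $\pi^{\downarrow}\in\mathbb{Q}_{n-1}^{B}$; if it is, then $\pi^{\downarrow}$ has exactly one forbidden pair and I would conclude $\pi^{\downarrow}\in\widetilde{\mathbb{Q}}_{n-1}^{B}$. This gives the claimed dichotomy.

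The one step deserving genuine care is verifying that a forbidden pair $(a,b)$ created by the deletion is admissible as the unique bad pair of a member of $\widetilde{\mathbb{Q}}_{n-1}^{B}$, i.e.\ that its index lies in the correct range over the reduced ground set $[n-1]$. Since $a$ and $b$ both have absolute value at most $n-1$, a forbidden pair $(a,b)=(j,j+1)$ or $(\overline{j},\overline{j+1})$ forces $j+1\le n-1$, so $1\le j\le n-2$; in particular the created pair may involve the largest element $n-1$ of the reduced ground set, as happens whenever $\pi$ contains a block $\cdots(n-2)\,n\,(n-1)\cdots$. I would therefore confirm that this is exactly the admissible index range for a unique forbidden pattern on $[n-1]$, so that $\pi^{\downarrow}$ indeed qualifies for $\widetilde{\mathbb{Q}}_{n-1}^{B}$. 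Everything else is the locality observation that deletion changes precisely one adjacency, which is why the lemma is stated as easy to observe.
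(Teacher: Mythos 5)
Your proof is correct, and it is worth noting that the paper itself supplies \emph{no} proof of this lemma---it is dismissed with ``should not be hard to observe''---so your locality argument is exactly the intended observation, carried out in full: deleting the unique entry $x\in\{n,\overline{n}\}$ destroys one or two adjacencies and creates at most one new adjacency $(a,b)$, every other adjacent pair of $\pi^{\downarrow}$ already occurred in $\pi\in\mathbb{Q}_n^{B}$ and is therefore not forbidden, hence $\pi^{\downarrow}$ has zero or exactly one forbidden pair, landing in $\mathbb{Q}_{n-1}^{B}$ or $\widetilde{\mathbb{Q}}_{n-1}^{B}$ respectively. The step you singled out for care is also the right one, and you resolved it correctly: since $|a|,|b|\le n-1$, a created forbidden pair $(j,j+1)$ or $(\overline{j},\overline{j+1})$ has $1\le j\le n-2$, which can include the top element $n-1$ of the reduced ground set (your block $\cdots(n-2)\,n\,(n-1)\cdots$). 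Here you are right to read the paper's stated range ``$1\le i<n-1$'' in the definition of $\widetilde{\mathbb{Q}}_n^{B}$ as a typo for $1\le i\le n-1$: the paper's own example $S^{\uparrow}(\overline{4}\,1\,3\,2)\ni\overline{4}\,\overline{5}\,1\,3\,2\in\widetilde{\mathbb{Q}}_5^{B}$ exhibits the pair $\overline{4}\,\overline{5}$ with $i=4=n-1$, and Lemma~\ref{lem:S-u} forces the larger range as well; under the literal reading the present lemma would even be false (e.g., $\pi=1\,3\,2\in\mathbb{Q}_3^{B}$ gives $\pi^{\downarrow}=1\,2$, which would then lie in neither set). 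With that reading fixed, your argument is complete.
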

	Accordingly, we immediately have
	\begin{align}
		Q_n^{B}(t)\label{eq:d}
		&=\sum_{\pi \in \mathbb{Q}_n^B} t^{b(\pi)}=\sum_{\pi\in \mathbb{Q}_n^{B}, \, \pi^{\downarrow}\in \widetilde{\mathbb{Q}}_{n-1}^{B} } t^{b(\pi)}+\sum_{\pi\in \mathbb{Q}_n^{B}, \, \pi^{\downarrow}\in \mathbb{Q}_{n-1}^{B}} t^{b(\pi)} .
	\end{align}
	To obtain a recursion of $Q_n^B(t)$, we next study the two sums on the right-hand side of eq.~\eqref{eq:d} in detail.
	For $\pi=\pi_1\pi_2\cdots\pi_{n-1}\in \mathbb{Q}_{n-1}^B$ and $n\geq 2$, denote by $S^{\uparrow}(\pi)$ the set of sequences in $\widetilde{\mathbb{Q}}_n^B$ that result from $\pi$ by lifting the elements larger than $\pi_i$ (for some $1\leq i\leq n-1$) by one and replacing $\pi_i$ with a length-two sequence $\pi_i(\pi_{i}+1)$, where we define the addition for bar-elements by the rule
	$
	\overline{i}+1=\overline{i+1}.
	$
		For example, for $\pi=\overline{4} 1 3 2$, $S^{\uparrow}(\pi)$ is given as follows:
		$$
		S^{\uparrow}(\pi) =\{\overline{4} \overline{5} 1 3 2, \overline{5} 1 2 4 3, \overline{5} 1 3 4 2, \overline{5} 1 4 2 3 \}.
		$$
	Moreover, if an element $x$ appears an entry in $\pi$, we write $x\in \pi$. 
	\begin{lemma}\label{lem:S}
		For {$n\geq 1$} and any $\pi \in \mathbb{Q}_n^{B}$, we have 
		\begin{align}
			\sum_{\pi'\in S^{\uparrow}(\pi)}t^{b(\pi')}=
			b(\pi)t^{b(\pi)+1}+(n-b(\pi))t^{b(\pi)}.
		\end{align}
	\end{lemma}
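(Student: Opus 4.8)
The plan is to compute the sum by bookkeeping the effect of each single expansion on the number of bar-elements, exploiting the fact that the addition rule $\overline{i}+1=\overline{i+1}$ makes the lifting operation sign-neutral. First I would record that $S^{\uparrow}(\pi)$ consists of exactly $n$ sequences, one for each choice of a position $i\in\{1,\dots,n\}$ at which $\pi_i$ is expanded into the doublet $\pi_i(\pi_i+1)$ (with the larger entries lifted). Thus the left-hand sum splits as a sum of $n$ monomials indexed by $i$, and it suffices to determine $b(\pi')$ for each of these $n$ candidate sequences $\pi'$.

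Next I would track $b$ under a single expansion at position $i$. Lifting every entry larger than $\pi_i$ by one neither creates nor destroys bars, since $i+1$ is unsigned whenever $i$ is and $\overline{i}+1=\overline{i+1}$ keeps the bar; hence the only change to the sign count comes from replacing $\pi_i$ by its doublet. If $\pi_i$ is unsigned, say $\pi_i=k$, then the inserted pair $k(k+1)$ is entirely unsigned and $b(\pi')=b(\pi)$. If $\pi_i$ is a bar-element, say $\pi_i=\overline{k}$, then the pair becomes $\overline{k}\,\overline{k+1}$, contributing two bars in place of one, so $b(\pi')=b(\pi)+1$.

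Summing over the $n$ positions then yields the claim: the $b(\pi)$ positions carrying a bar each contribute a term $t^{b(\pi)+1}$, while the remaining $n-b(\pi)$ unsigned positions each contribute $t^{b(\pi)}$, giving $b(\pi)t^{b(\pi)+1}+(n-b(\pi))t^{b(\pi)}$. The one point that needs a line of justification is that the $n$ expansions really produce $n$ distinct sequences, so that no monomial is lost upon passing to the set $S^{\uparrow}(\pi)$; this follows because each resulting sequence contains a unique consecutive doublet whose value and location recover the index $i$, the entries of $\pi$ being pairwise distinct. I expect this distinctness check to be the only mild obstacle, the sign count itself being immediate once the lifting is seen to be bar-preserving. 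As a sanity check one may revisit the worked example $\pi=\overline{4}\,1\,3\,2$, where $b(\pi)=1$ and the four listed sequences give $t^{2}+3t=b(\pi)t^{b(\pi)+1}+(n-b(\pi))t^{b(\pi)}$.
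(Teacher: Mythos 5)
Your proposal is correct and follows essentially the same argument as the paper's proof: split the sum over the $n$ positions of $\pi$ and observe that expanding a bar-element raises $b$ by one while expanding an unsigned element leaves it fixed, yielding $b(\pi)t^{b(\pi)+1}+(n-b(\pi))t^{b(\pi)}$. Your extra check that the $n$ expansions produce $n$ distinct sequences (via the unique consecutive doublet recovering the index $i$) is a detail the paper leaves implicit, and it is a welcome addition rather than a divergence in method.
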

	\begin{proof}
		For any $\pi=\pi_1\pi_2\cdots\pi_n\in \mathbb{Q}_n^B$, it has $b(\pi)$ bar-elements and $n-b(\pi)$ elements without a bar. For any $\pi_i\in\pi$ with a bar, it will generate an additional bar-element after lifting the elements larger than $\pi_i$ (for some $1\leq i\leq {n}$) by one and replacing $\pi_i$ with a length-two sequence $\pi_i(\pi_{i}+1)$. In other words, it will contribute $t^{b(\pi)+1}$. However, for any $\pi_i\in\pi$ without a bar, the number of bar-elements in the sequence will not change. Therefore, it contributes $t^{b(\pi)}$.
		Summarizing the two cases gives the lemma.
	\end{proof}
	
	The lemma right below is not difficult to verify.
	\begin{lemma}\label{lem:S-u}
		If $\pi, \pi' \in \mathbb{Q}_{n-1}^B $ and $\pi\neq \pi'$, then $S^{\uparrow}(\pi) \bigcap S^{\uparrow}(\pi')=\varnothing$.
		Moreover,
		\begin{align}
			\widetilde{\mathbb{Q}}_{n}^B= \bigcup_{\pi \in \mathbb{Q}_{n-1}^B} S^{\uparrow}(\pi).
		\end{align}
	\end{lemma}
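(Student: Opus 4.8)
The plan is to exhibit an explicit inverse to the expansion map $S^{\uparrow}$, which delivers both the disjointness and the covering claims simultaneously. First I would record the (essentially already-verified) fact that each $\sigma\in S^{\uparrow}(\pi)$ lies in $\widetilde{\mathbb{Q}}_n^B$ and contains exactly one ascending consecutive block, namely the pair $\pi_i(\pi_i+1)$ that was inserted (with the convention $\overline{i}+1=\overline{i+1}$ for bar-elements). The point is that lifting every entry of absolute value exceeding $|\pi_i|$ by one neither creates nor destroys an ascending consecutive pair among the old adjacencies, while the freshly created junction immediately to the right of the inserted block is harmless precisely because $\pi\in\mathbb{Q}_{n-1}^B$ forbids $\pi_i$ from being followed by $\pi_i+1$. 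This already gives the inclusion $\bigcup_{\pi}S^{\uparrow}(\pi)\subseteq\widetilde{\mathbb{Q}}_n^B$.

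For the reverse direction I would define a contraction map $\kappa\colon\widetilde{\mathbb{Q}}_n^B\to(\text{signed permutations on }[n-1])$ as follows. Given $\sigma\in\widetilde{\mathbb{Q}}_n^B$, its unique ascending block has the form $w(w+1)$ for a unique entry $w$, say with $|w|=v$; let $\kappa(\sigma)$ be the sequence obtained by deleting the entry $w+1$ and lowering every entry of absolute value exceeding $v+1$ by one. A count of absolute values shows $\kappa(\sigma)$ is a signed permutation on $[n-1]$, and this lowering is exactly inverse to the lifting used by $S^{\uparrow}$. The substantive step is to verify $\kappa(\sigma)\in\mathbb{Q}_{n-1}^B$, i.e.\ that no ascending consecutive pair survives or is newly born; the old adjacencies are disposed of by the same ``lifting and lowering preserve consecutiveness away from the cut value'' bookkeeping used in the first paragraph.

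The main obstacle, and the one place where both defining properties are needed, is the single new adjacency created when $w+1$ is deleted, which joins $w$ to the lowered image of its former second-right neighbour $\sigma_{j+2}$. After lowering, this junction becomes an ascending consecutive pair exactly when $\sigma_{j+2}=w+2$; but $(w+1)\,\sigma_{j+2}$ is itself an adjacency of $\sigma$, and since $\sigma\in\widetilde{\mathbb{Q}}_n^B$ has $w(w+1)$ as its \emph{only} ascending block, we must have $\sigma_{j+2}\neq w+2$. Hence the junction is safe and $\kappa(\sigma)\in\mathbb{Q}_{n-1}^B$.

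Finally I would check that $S^{\uparrow}$ and $\kappa$ are mutually inverse in the needed sense: for any $\sigma\in S^{\uparrow}(\pi)$, applying $\kappa$ deletes precisely the inserted block and undoes the lifting, so $\kappa(\sigma)=\pi$; conversely $\sigma\in S^{\uparrow}\!\big(\kappa(\sigma)\big)$ for every $\sigma\in\widetilde{\mathbb{Q}}_n^B$, by reinserting the deleted entry and lifting. The first identity yields disjointness, since $\sigma\in S^{\uparrow}(\pi)\cap S^{\uparrow}(\pi')$ forces $\pi=\kappa(\sigma)=\pi'$, and the second yields the remaining inclusion $\widetilde{\mathbb{Q}}_n^B\subseteq\bigcup_{\pi}S^{\uparrow}(\pi)$, completing the proof.
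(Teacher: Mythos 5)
Your proof is correct, and there is nothing in the paper to diverge from: the paper gives no argument for this lemma at all, dismissing it with ``the lemma right below is not difficult to verify.'' Your explicit contraction map $\kappa$ is the natural way to supply the missing verification, and it does so completely: the identity $\kappa(\sigma)=\pi$ for $\sigma\in S^{\uparrow}(\pi)$ gives disjointness, $\sigma\in S^{\uparrow}(\kappa(\sigma))$ gives the covering inclusion, and the inclusion $S^{\uparrow}(\pi)\subseteq\widetilde{\mathbb{Q}}_n^B$ falls out of the same lifting bookkeeping. You also correctly isolate the only two places where the hypotheses are genuinely used: the junction to the right of the inserted block is non-ascending because $\pi\in\mathbb{Q}_{n-1}^B$ forbids $\pi_i$ from being followed by $\pi_i+1$, and the junction created by deleting $w+1$ is non-ascending because $\sigma_{j+2}=w+2$ would make $(w+1)(w+2)$ a second ascending block in $\sigma$, contradicting the ``exactly once'' clause defining $\widetilde{\mathbb{Q}}_n^B$ (which, as you use implicitly, also rules out overlapping blocks such as $\sigma_{j-1}=w-1$, so the left junction is safe too). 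Two small points would deserve explicit mention in a full write-up: (a) every consecutiveness check must compare bar status as well as absolute value --- e.g.\ $\pi_i$ may legitimately be followed by the oppositely-barred element of absolute value $|\pi_i|+1$ in $\pi$, and this is harmless after insertion precisely because the bars then mismatch, a case your phrasing glosses over; and (b) since each absolute value occurs exactly once in a signed permutation, the entry of absolute value $v+1$ in $\sigma$ is exactly the deleted $w+1$, which is what makes $\kappa$ well defined and makes the case split $|x|\leq v$ versus $|x|\geq v+2$ exhaustive for the surviving entries. Neither is a gap, just bookkeeping to be spelled out.
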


	\begin{proposition} \label{prop:w-tilde}
		For {$n\geq 2$}, we have
		\begin{align}
			\sum_{\pi\in \mathbb{Q}_n^{B}, \, \pi^{\downarrow}\in \widetilde{\mathbb{Q}}_{n-1}^{B} } t^{b(\pi)}=(1+t){\big \{}(t^2-t)Q_{n-2}^{B}{'}(t)+(n-2)Q_{n-2}^B(t){\big\}},
		\end{align}
		where $Q_n^B{'}(t)$ stands for the derivative of $Q_n^B(t)$ with respect to $t$.
	\end{proposition}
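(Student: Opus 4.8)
The plan is to factor the left-hand side as $(1+t)$ times the auxiliary generating function $\widetilde{Q}_{n-1}^B(t) := \sum_{\sigma \in \widetilde{\mathbb{Q}}_{n-1}^{B}} t^{b(\sigma)}$, and then to evaluate $\widetilde{Q}_{n-1}^B(t)$ directly from Lemmas~\ref{lem:S} and~\ref{lem:S-u}. Concretely, I would first prove the combinatorial identity $\sum_{\pi \in \mathbb{Q}_n^{B},\,\pi^{\downarrow} \in \widetilde{\mathbb{Q}}_{n-1}^{B}} t^{b(\pi)} = (1+t)\,\widetilde{Q}_{n-1}^B(t)$, and then show $\widetilde{Q}_{n-1}^B(t) = (t^2-t)(Q_{n-2}^B)'(t) + (n-2)Q_{n-2}^B(t)$; multiplying the two expressions gives the claim.

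For the factorization, I would fix $\sigma \in \widetilde{\mathbb{Q}}_{n-1}^{B}$ and count the $\pi \in \mathbb{Q}_n^{B}$ with $\pi^{\downarrow} = \sigma$. Every such $\pi$ arises from $\sigma$ by inserting $n$ or $\overline{n}$ into some gap. By definition $\sigma$ contains exactly one consecutive pair $i(i+1)$ or $\overline{i}(\overline{i+1})$, and since $\pi$ must be free of such pairs, the inserted letter has to separate the two entries of this unique pair; inserting into any other gap leaves the offending pair intact. Thus the position of insertion is forced. It then remains to check that both choices of letter, $n$ and $\overline{n}$, actually yield a valid $\pi \in \mathbb{Q}_n^{B}$, i.e. that neither creates a fresh consecutive pair on either side of the insertion. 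Here I would use that $n$ is the largest label, so that no pair $n(n+1)$ or $\overline{n}(\overline{n+1})$ can occur, together with the fact that the index $i$ of the split pair satisfies $i \le n-2$ (because $i+1 \le n-1$); a short case check on the sign of the split pair and the sign of the inserted letter rules out any new pair. Consequently each $\sigma$ produces exactly two descendants, one with $n$ unsigned (contributing $t^{b(\sigma)}$) and one with $\overline{n}$ (contributing $t^{b(\sigma)+1}$), so the total contribution of $\sigma$ is $(1+t)t^{b(\sigma)}$, and summing over $\sigma$ gives the factorization.

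For the evaluation of $\widetilde{Q}_{n-1}^B(t)$, I would apply Lemma~\ref{lem:S-u} with $n$ replaced by $n-1$ to write $\widetilde{\mathbb{Q}}_{n-1}^{B}$ as the disjoint union $\bigcup_{\pi \in \mathbb{Q}_{n-2}^B} S^{\uparrow}(\pi)$, and then Lemma~\ref{lem:S} with ground set of size $n-2$ to obtain $\widetilde{Q}_{n-1}^B(t) = \sum_{\pi \in \mathbb{Q}_{n-2}^B}\bigl( b(\pi)t^{b(\pi)+1} + (n-2-b(\pi))t^{b(\pi)} \bigr)$. The two pieces are then rewritten using the standard identities $\sum_{\pi \in \mathbb{Q}_{n-2}^B} b(\pi)t^{b(\pi)} = t(Q_{n-2}^B)'(t)$ and $\sum_{\pi \in \mathbb{Q}_{n-2}^B} t^{b(\pi)} = Q_{n-2}^B(t)$, which turn the right-hand side into $t^2(Q_{n-2}^B)'(t) + (n-2)Q_{n-2}^B(t) - t(Q_{n-2}^B)'(t) = (t^2-t)(Q_{n-2}^B)'(t) + (n-2)Q_{n-2}^B(t)$, exactly the bracketed factor.

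I expect the genuine work to lie in the factorization step, specifically in the claim that the insertion position is forced and that both sign choices are admissible. The uniqueness of the position is the clean part; the admissibility is where one must be careful, since inserting a letter adjacent to the split pair could a priori create a new consecutive pair. The argument that it cannot relies essentially on $n$ being maximal and on $i \le n-2$, and it is worth recording the small case analysis (split pair signed versus unsigned, inserted letter signed versus unsigned) rather than merely asserting it. The evaluation step, by contrast, is a routine manipulation of the two preceding lemmas together with the derivative identity for $b(\pi)$, and I would treat it as bookkeeping.
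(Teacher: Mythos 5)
Your proposal is correct and follows essentially the same route as the paper: the same $(1+t)$-factorization via the two-to-one correspondence between $\pi\in\mathbb{Q}_n^{B}$ and $\pi^{\downarrow}\in\widetilde{\mathbb{Q}}_{n-1}^{B}$ (the paper dispatches this step with ``by construction,'' which your forced-position and sign-admissibility case check merely makes explicit), followed by the identical evaluation of $\sum_{\sigma\in\widetilde{\mathbb{Q}}_{n-1}^{B}}t^{b(\sigma)}$ through Lemmas~\ref{lem:S-u} and~\ref{lem:S} and the derivative identity. No gaps; your added detail on why inserting $n$ or $\overline{n}$ inside the unique offending pair creates no new pair (using $i\le n-2$ and sign mismatches) is a sound, if optional, elaboration of the paper's argument.
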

	\begin{proof}
		First, by construction, there are exactly two signed permutations $\pi, \pi'\in \mathbb{Q}_n^{B}$ such that 
		$
		\pi^{\downarrow}=\pi'^{\downarrow}\in \widetilde{\mathbb{Q}}_{n-1}^{B},
		$
		and vice versa. Specifically, if $n\in \pi$, then $\pi'$ can be obtained by replacing $n$ with $\overline{n}$ in $\pi$. Thus,
		$t^{b(\pi^{\downarrow})}=t^{b(\pi'^{\downarrow})}=t^{b(\pi)}=t^{b(\pi')-1}$ and 
		$$
		\sum_{\pi\in \mathbb{Q}_n^{B}, \, \pi^{\downarrow}\in \widetilde{\mathbb{Q}}_{n-1}^{B} } t^{b(\pi)}
		=\sum_{ \pi{'}\in \widetilde{\mathbb{Q}}_{n-1}^{B} } (1+t)t^{b(\pi{'})}.
		$$ 
		Next, we have
		\begin{align*}
			\sum_{ \pi'\in \widetilde{\mathbb{Q}}_{n-1}^{B} } t^{b(\pi')}
			&=\sum_{\pi''\in \mathbb{Q}_{n-2}^B}\sum_{\pi'\in     S^{\uparrow}(\pi'')}t^{b(\pi')}\\
			&=\sum_{\pi''\in \mathbb{Q}_{n-2}^B}{\Big\{ }b(\pi'')\cdot t+{\big [}n-2-b(\pi''){\big ]}{\Big \}}t^{b(\pi'')}\\
			&=\sum_{\pi''\in \mathbb{Q}_{n-2}^B}\Big\{(t-1)b(\pi'')t^{b(\pi'')}+(n-2)t^{b(\pi'')}\Big\}\\
			&=(t^2-t)Q_{n-2}^{B}{'}(t)+(n-2)Q_{n-2}^B(t),
		\end{align*}
		where the first two equalities follow from Lemma~\ref{lem:S} and Lemma~\ref{lem:S-u}, respectively,
		and then the proof follows.
	\end{proof}
	
	\begin{proposition}\label{prop:no-tilde}
		For {$n\geq 2$}, we have
		\begin{align}\label{8}
			\sum_{\pi \in \mathbb{Q}_n^{B}, \, \pi^{\downarrow}\in \mathbb{Q}_{n-1}^{B}} t^{b(\pi)} = (nt+n-1)Q_{n-1}^B(t)+ (1-t)\sum_{\pi'\in \mathbb{Q}_{n-1}^B, \, {\overline{n-1} \in \pi'}} t^{b(\pi')}.
		\end{align}
	\end{proposition}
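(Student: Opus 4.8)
The plan is to reorganize the left-hand side as a sum over $\pi'\in\mathbb{Q}_{n-1}^B$ by inverting the operation $\pi\mapsto\pi^{\downarrow}$. Every $\pi\in\mathbb{Q}_n^B$ with $\pi^{\downarrow}\in\mathbb{Q}_{n-1}^B$ is recovered from $\pi'=\pi^{\downarrow}$ by reinserting the maximal letter (either $n$ or $\overline{n}$) into the gap it originally occupied, and conversely each insertion of $n$ or $\overline{n}$ into $\pi'$ that yields a member of $\mathbb{Q}_n^B$ produces a distinct $\pi$ with $\pi^{\downarrow}=\pi'\in\mathbb{Q}_{n-1}^B$. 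I would therefore set up a bijection between the sequences indexing the left-hand side and the pairs consisting of $\pi'\in\mathbb{Q}_{n-1}^B$ together with an admissible insertion of $n$ or $\overline{n}$ into one of the $n$ gaps of $\pi'$, and rewrite the left-hand side as $\sum_{\pi'\in\mathbb{Q}_{n-1}^B}W(\pi')$, where $W(\pi')$ is the total $t$-weight of all admissible insertions into $\pi'$.

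The key step is to decide which insertions are admissible. Because $n$ and $\overline{n}$ are maximal letters, the new right-hand adjacency created by an insertion can never be of the forbidden form $i(i+1)$ or $\overline{i}(\overline{i+1})$; only the new left-hand adjacency can be forbidden. Hence an insertion fails exactly when it places $n$ immediately after an unsigned $n-1$, or places $\overline{n}$ immediately after a signed $\overline{n-1}$. Since precisely one of $n-1,\overline{n-1}$ occurs in $\pi'$, there is exactly one forbidden insertion per $\pi'$: if $\overline{n-1}\notin\pi'$ it is the insertion of $n$, of weight $t^{b(\pi')}$, and if $\overline{n-1}\in\pi'$ it is the insertion of $\overline{n}$, of weight $t^{b(\pi')+1}$. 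As inserting $n$ leaves the number of bar-elements unchanged while inserting $\overline{n}$ raises it by one, the unrestricted total over all $n$ gaps is $n(1+t)t^{b(\pi')}$.

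It then remains to subtract the single forbidden contribution and sum over $\pi'$. Writing $A=\sum_{\pi'\colon\,\overline{n-1}\notin\pi'}t^{b(\pi')}$ and $B=\sum_{\pi'\colon\,\overline{n-1}\in\pi'}t^{b(\pi')}$ so that $A+B=Q_{n-1}^B(t)$, one finds $W(\pi')=(nt+n-1)t^{b(\pi')}$ in the first class and $W(\pi')=(nt+n-t)t^{b(\pi')}$ in the second, whence the left-hand side equals $(nt+n-1)(A+B)+(1-t)B$, which is exactly the asserted expression. I expect the main obstacle to be the case analysis confirming that there is precisely one forbidden insertion per $\pi'$ and identifying its sign-dependent weight; once that is pinned down, recombining the two classes into the stated closed form is routine bookkeeping.
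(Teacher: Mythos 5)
Your proposal is correct and takes essentially the same route as the paper's proof: both invert $\pi\mapsto\pi^{\downarrow}$ and count insertions of $n$ or $\overline{n}$ into the $n$ gaps of $\pi'\in\mathbb{Q}_{n-1}^B$, observing that exactly one insertion is forbidden depending on whether $n-1$ or $\overline{n-1}$ occurs in $\pi'$, with $\overline{n}$ carrying an extra factor of $t$. Your ``all $2n$ insertions minus the single forbidden one'' bookkeeping is merely a repackaging of the paper's case split by inserted letter, and the final recombination into $(nt+n-1)Q_{n-1}^B(t)+(1-t)\sum_{\overline{n-1}\in\pi'}t^{b(\pi')}$ matches the paper exactly.
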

	\begin{proof}
		A sequence $\pi \in \mathbb{Q}_n^B$ where $n$ appears can be clearly obtained by inserting $n$ into a sequence $\pi^{\downarrow}\in \mathbb{Q}_{n-1}^B$.
		We distinguish two cases:
		\begin{itemize}
			\item 	 	 if $n-1$ appears in $\pi^{\downarrow} \in \mathbb{Q}_{n-1}^B$, there are $n-1$ positions where $n$ can be inserted.
			\item if $\overline{n-1}$ appears in $\pi^{\downarrow} \in \mathbb{Q}_{n-1}^B$, there are $n$ positions where $n$ can be inserted.
		\end{itemize} 	
		Note that in both cases, we have $b(\pi)=b(\pi^{\downarrow})$. Thus,
		\begin{align*}
			\sum\limits_{\pi\in \mathbb{Q}_n^B, \, {n\in\pi}, \, \pi^{\downarrow}\in \mathbb{Q}_{n-1}^{B}}t^{b(\pi)}
			&=\sum\limits_{\pi'\in \mathbb{Q}_{n-1}^B, \,  {n-1}\in\pi'}(n-1)\cdot t^{b(\pi')}+\sum\limits_{\pi'\in \mathbb{Q}_{n-1}^B, \,  {\overline{n-1}}\in\pi'}n\cdot t^{b(\pi')}\\
			&=(n-1)Q_{n-1}^B(t)+\sum\limits_{\pi'\in \mathbb{Q}_{n-1}^B, \, {\overline{n-1}}\in\pi'}t^{b(\pi')} \, .
		\end{align*}
		Similarly, the situation of inserting $\overline n$ can be calculated.
		We also distinguish two cases:
		
		\begin{itemize}
			\item	if $n-1$ appears in $\pi^{\downarrow}\in \mathbb{Q}_{n-1}^B$, there are $n$ positions where $\overline n$ can be inserted.
			\item if $\overline{n-1}$ appears in $\pi^{\downarrow}\in \mathbb{Q}_{n-1}^B$, there are $n-1$ positions where $\overline n$ can be inserted.
		\end{itemize}	
		The difference is that in this case, we have $b(\pi)=b(\pi^{\downarrow})+1$. Thus,
		\begin{align*}
			\sum\limits_{\pi\in \mathbb{Q}_n^B, \, {\overline{n}\in\pi}, \,  \pi^{\downarrow}\in \mathbb{Q}_{n-1}^{B}}t^{b(\pi)}
			&=\sum\limits_{\pi'\in \mathbb{Q}_{n-1}^B ,{n-1}\in\pi'}nt\cdot t^{b(\pi')}+\sum\limits_{\pi'\in \mathbb{Q}_{n-1}^B, {\overline{n-1}}\in\pi'}(n-1)t\cdot t^{b(\pi')}\\
			&=ntQ_{n-1}^B(t)-t\sum\limits_{\pi'\in \mathbb{Q}_{n-1}^B , {\overline{n-1}}\in\pi'}t^{b(\pi')}.
		\end{align*}
		Combining the above two cases, we obtain the proposition.
	\end{proof}
	
	\begin{proposition}\label{prop:D}
		For $n\geq 3$, we have 
		\begin{align}
			\sum_{\pi'\in \mathbb{Q}_{n-1}^B, \, {\overline{n-1} \in \pi'}} t^{b(\pi')}
			=&(n-1)tQ_{n-2}^B(t)+t\big{\{}(t^2-t)Q_{n-3}^{B}{'}(t)+(n-3)Q_{n-3}^B(t)\big{\}}\nonumber\\
			&-t\sum_{\pi'' \in \mathbb{Q}_{n-2}^B, \, \overline{n-2}\in \pi''} t^{b(\pi'')} \, .
		\end{align}
	\end{proposition}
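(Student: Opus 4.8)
The plan is to mimic the insertion/removal strategy of Proposition~\ref{prop:no-tilde}, but now passing from $\mathbb{Q}_{n-1}^{B}$ down to size $n-2$ by deleting the largest signed element $\overline{n-1}$. For any $\pi'\in \mathbb{Q}_{n-1}^{B}$ with $\overline{n-1}\in\pi'$, the entry $\overline{n-1}$ is a bar-element, so $b(\pi')=b(\pi'^{\downarrow})+1$ and hence
$$
\sum_{\pi'\in \mathbb{Q}_{n-1}^B, \, \overline{n-1}\in\pi'} t^{b(\pi')}= t\sum_{\pi'\in \mathbb{Q}_{n-1}^B, \, \overline{n-1}\in\pi'} t^{b(\pi'^{\downarrow})}.
$$
By the lemma asserting that $\pi'^{\downarrow}\in \mathbb{Q}_{n-2}^{B}$ or $\pi'^{\downarrow}\in \widetilde{\mathbb{Q}}_{n-2}^{B}$, I would split the right-hand sum into these two exhaustive and disjoint cases and reconstruct each $\pi'$ by re-inserting $\overline{n-1}$ into $\pi'^{\downarrow}$.

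For the case $\pi'^{\downarrow}=\sigma\in \mathbb{Q}_{n-2}^{B}$, the key observation is that inserting $\overline{n-1}$ into one of the $n-1$ gaps of $\sigma$ alters only the two adjacencies $\sigma_j\,\overline{n-1}$ and $\overline{n-1}\,\sigma_{j+1}$: the latter can never be forbidden (that would require $\overline{n}\in\sigma$), while the former is forbidden precisely when $\sigma_j=\overline{n-2}$. Thus every $\sigma$ containing the unbarred $n-2$ admits all $n-1$ insertions, whereas every $\sigma$ containing $\overline{n-2}$ admits exactly $n-2$ of them (the gap immediately after $\overline{n-2}$ is blocked). Collecting the weights $t^{b(\sigma)}$ and using $\sum_{n-2\in\sigma}t^{b(\sigma)}=Q_{n-2}^B(t)-\sum_{\overline{n-2}\in\sigma}t^{b(\sigma)}$, this case contributes
$$
(n-1)Q_{n-2}^B(t)-\sum_{\sigma\in \mathbb{Q}_{n-2}^B, \, \overline{n-2}\in\sigma}t^{b(\sigma)},
$$
which, after the global factor $t$, yields exactly the first and third terms of the claimed identity.

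For the case $\pi'^{\downarrow}=\tau\in \widetilde{\mathbb{Q}}_{n-2}^{B}$, since $\tau$ carries exactly one forbidden adjacency and $\pi'$ must be a genuine relative derangement, $\overline{n-1}$ is forced into that single forbidden gap; this insertion is always legal (the left neighbor cannot be $\overline{n-2}$, else $\tau$ would already contain $\overline{n-1}$), so each $\tau$ produces exactly one $\pi'$. Hence this case contributes $\sum_{\tau\in \widetilde{\mathbb{Q}}_{n-2}^B}t^{b(\tau)}$, which by the computation inside the proof of Proposition~\ref{prop:w-tilde} (with $n$ replaced by $n-1$, valid since $n\geq 3$) equals $(t^2-t)Q_{n-3}^{B}{'}(t)+(n-3)Q_{n-3}^B(t)$. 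Multiplying the sum of the two cases by $t$ then assembles exactly the right-hand side.

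I expect the main obstacle to be the adjacency bookkeeping in the first case: one must verify that the only way inserting $\overline{n-1}$ can create a forbidden pattern is through a left neighbor $\overline{n-2}$, and that breaking an old (non-forbidden) adjacency of $\sigma$ reintroduces nothing illegal. A secondary point needing care is the uniqueness in the second case, namely that inserting $\overline{n-1}$ into any gap other than the forbidden one leaves $\pi'$ outside $\mathbb{Q}_{n-1}^{B}$; this guarantees the map $\tau\mapsto\pi'$ is a bijection and, together with disjointness of the two cases, rules out any double counting.
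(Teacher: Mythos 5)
Your proposal is correct and takes essentially the same approach as the paper: after pulling out the factor $t$ coming from the deleted $\overline{n-1}$, the paper likewise splits according to whether $\pi'^{\downarrow}\in\mathbb{Q}_{n-2}^{B}$ or $\pi'^{\downarrow}\in\widetilde{\mathbb{Q}}_{n-2}^{B}$, obtaining the first case from the insertion count already established in Proposition~\ref{prop:no-tilde} (which you re-derive directly, with the same gap-blocking analysis) and the second case from the computation in the proof of Proposition~\ref{prop:w-tilde} via Lemmas~\ref{lem:S} and~\ref{lem:S-u}, exactly as you do.
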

	\begin{proof}
		Analogously, we first have
		\begin{align*}
			\sum\limits_{\pi'\in \mathbb{Q}_{n-1}^B,\, {\overline{n-1}}\in\pi'}t^{b(\pi')}
			=&\sum_{\pi'\in \mathbb{Q}_{n-1}^B, \,  {\overline{n-1}}\in\pi', \, \pi'^{\downarrow}\in \mathbb{Q}_{n-2}^B} t^{b(\pi'^{\downarrow})}
			+\sum\limits_{\pi'\in \mathbb{Q}_{n-1}^B, \,  {\overline{n-1}}\in\pi', \, \pi'^{\downarrow} \in \widetilde{\mathbb{Q}}_{n-2}^{B}}t^{b(\pi'^{\downarrow})}.
		\end{align*}
		The first sum of the right-hand side has been obtained in Proposition~\ref{prop:no-tilde} and equals
		\begin{align*}
			&(n-1)tQ_{n-2}^B(t)-t\sum_{\pi'' \in \mathbb{Q}_{n-2}^B, \,  \overline{n-2}\in{\pi''}} t^{b(\pi'')}.
		\end{align*}
		Following the proof of Proposition~\ref{prop:w-tilde}, the second sum of the right-hand side equals
			\begin{align*}
				&\sum\limits_{\pi'' \in \widetilde{\mathbb{Q}}_{n-2}^{B}}t\cdot t^{b(\pi'')}
				=t\sum_{\pi'''\in \mathbb{Q}_{n-3}^B}\sum_{\pi''\in     S^{\uparrow}(\pi''')}t^{b(\pi'')}\\
				&=t\sum_{\pi'''\in \mathbb{Q}_{n-3}^B}{\Big \{}b(\pi''')\cdot t+\big{[}n-3-b(\pi''')\big{]}{\Big \}}t^{b(\pi''')}\\
				&=t\sum_{\pi'''\in \mathbb{Q}_{n-3}^B}\Big\{(t-1)b(\pi''')t^{b(\pi''')}+(n-3)t^{b(\pi''')}\Big\}\\
				&=t{\big [}(t^2-t)Q_{n-3}^{B}{'}(t)+(n-3)Q_{n-3}^B(t){\big ]}.
			\end{align*}
		The rest is clear and the proof follows.
	\end{proof}	
	
	Based on Proposition~\ref{prop:w-tilde}--\ref{prop:D}, we conclude
	
	\begin{theorem}\label{main}
		For $n\geq3$, the following holds
		\begin{align}\label{eq:qnt-recur}
			Q_n^B(t)
			&=(n-1)(t+1)Q_{n-1}^B(t)+\big\{(3n-5)t+(n-2)\big\}Q_{n-2}^{B}(t) \nonumber\\
			&\quad +(t^3-t)Q_{n-2}^{B}{'}(t) 
			+(2n-6)tQ_{n-3}^{B}(t)+2t^2(t-1)Q_{n-3}^{B}{'}(t),
		\end{align}
		and $Q_0^B(t)=0$, $Q_1^B(t)=1+t$, $Q_2^B(t)=t^2+4t+1$.
	\end{theorem}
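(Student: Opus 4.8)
The plan is to assemble the recursion from the three preceding propositions together with the splitting~\eqref{eq:d}, the one genuine difficulty being the elimination of a residual sum that none of the propositions evaluates in closed form. Write $R_m := \sum_{\pi \in \mathbb{Q}_m^B,\, \overline{m} \in \pi} t^{b(\pi)}$ for the contribution of those signed relative derangements in which the largest entry carries a bar. Substituting Proposition~\ref{prop:w-tilde} and Proposition~\ref{prop:no-tilde} into the two terms of~\eqref{eq:d} gives the master identity
\begin{align*}
Q_n^B(t) &= (1+t)\big\{(t^2-t)Q_{n-2}^{B}{'}(t)+(n-2)Q_{n-2}^B(t)\big\} \\
&\quad + (nt+n-1)Q_{n-1}^B(t) + (1-t)R_{n-1}, \tag{$\star$}
\end{align*}
valid for $n \ge 2$. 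Thus the whole problem reduces to expressing $(1-t)R_{n-1}$ through the polynomials $Q_{n-1}^B, Q_{n-2}^B, Q_{n-3}^B$ and their derivatives.

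Proposition~\ref{prop:D} does not evaluate $R_{n-1}$ directly; it only propagates it,
\begin{align*}
R_{n-1} = (n-1)tQ_{n-2}^B(t) + t\big\{(t^2-t)Q_{n-3}^{B}{'}(t)+(n-3)Q_{n-3}^B(t)\big\} - tR_{n-2},
\end{align*}
so a second relation is needed to break the recursion. The key observation is that $(\star)$ itself, read at index $n-1$, furnishes exactly such a relation:
\begin{align*}
(1-t)R_{n-2} = Q_{n-1}^B(t) - (1+t)\big\{(t^2-t)Q_{n-3}^{B}{'}(t)+(n-3)Q_{n-3}^B(t)\big\} - \big((n-1)t+n-2\big)Q_{n-2}^B(t).
\end{align*}
Multiplying the displayed $R_{n-1}$ relation by $(1-t)$ and inserting this expression for $(1-t)R_{n-2}$ cancels both residual sums. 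Writing $A := (t^2-t)Q_{n-3}^{B}{'}(t)+(n-3)Q_{n-3}^B(t)$, the $A$-terms combine as $(1-t)tA + t(1+t)A = 2tA$ and the $Q_{n-2}^B$-terms collapse to $(2n-3)t\,Q_{n-2}^B(t)$, yielding
\begin{align*}
(1-t)R_{n-1} = (2n-3)t\,Q_{n-2}^B(t) + 2t\big\{(t^2-t)Q_{n-3}^{B}{'}(t)+(n-3)Q_{n-3}^B(t)\big\} - t\,Q_{n-1}^B(t),
\end{align*}
a closed form in the desired polynomials; this step requires $n \ge 3$, which is the hypothesis of the theorem.

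Finally, I would substitute this closed form back into $(\star)$ and collect like terms: the $Q_{n-1}^B$ coefficient becomes $(nt+n-1)-t = (n-1)(t+1)$; the derivative term $(1+t)(t^2-t)Q_{n-2}^{B}{'}$ simplifies to $(t^3-t)Q_{n-2}^{B}{'}$; the $Q_{n-2}^B$ coefficient becomes $(1+t)(n-2)+(2n-3)t = (3n-5)t+(n-2)$; and the $Q_{n-3}^B$ contributions give $(2n-6)tQ_{n-3}^B + 2t^2(t-1)Q_{n-3}^{B}{'}$, which is precisely~\eqref{eq:qnt-recur}. The base cases $Q_0^B=0$, $Q_1^B=1+t$, and $Q_2^B=t^2+4t+1$ are checked by direct enumeration of $\mathbb{Q}_1^B=\{1,\overline 1\}$ and of the six admissible sequences in $\mathbb{Q}_2^B$. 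I expect the middle step to be the main obstacle: since Proposition~\ref{prop:D} only relates consecutive residual sums rather than evaluating one of them, the crux is recognizing that the master identity at the shifted index supplies the missing equation, after which the residual terms cancel and the coefficients collapse as above.
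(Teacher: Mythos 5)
Your proof is correct, and while it rests on the same skeleton as the paper's --- the splitting~\eqref{eq:d}, Propositions~\ref{prop:w-tilde} and~\ref{prop:no-tilde} to get the master identity, and the shifted-index trick to eliminate the residual sums $R_{m}=\sum_{\pi\in\mathbb{Q}_m^B,\,\overline{m}\in\pi}t^{b(\pi)}$ --- your execution of the elimination is genuinely more economical. The paper iterates Proposition~\ref{prop:D} all the way down to $R_1=t$, producing the alternating sums $\sum_{k=1}^{n-2}(-1)^{k+1}(n-k)t^kQ_{n-k-1}^B(t)$ and the boundary term $(-1)^n(1-t)t^{n-1}$ in eq.~\eqref{eq:e}, then writes the same unrolled identity at index $n-1$ and recognizes that the sums equal $(-t)$ times a bracketed combination of it. You instead apply Proposition~\ref{prop:D} exactly once, solve the master identity $(\star)$ at index $n-1$ for $(1-t)R_{n-2}$, and substitute; the residual sums cancel in two lines with no infinite descent, no alternating-sign bookkeeping, and no need for the initial value $R_1=t$. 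I verified your algebra: with $A=(t^2-t)Q_{n-3}^{B}{'}(t)+(n-3)Q_{n-3}^B(t)$ one indeed gets $(1-t)R_{n-1}=(2n-3)tQ_{n-2}^B(t)+2tA-tQ_{n-1}^B(t)$, and back-substitution yields the coefficients $(n-1)(t+1)$, $(3n-5)t+(n-2)$, $t^3-t$, $(2n-6)t$, and $2t^2(t-1)$ exactly as in~\eqref{eq:qnt-recur}; your range bookkeeping ($(\star)$ needs $n\geq 2$, Proposition~\ref{prop:D} and the shifted identity need $n\geq 3$) and the base-case enumerations ($|\mathbb{Q}_1^B|=2$, $|\mathbb{Q}_2^B|=6$) are also right. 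What the paper's longer route buys is the explicit intermediate identity eq.~\eqref{eq:e} expressing $Q_n^B(t)$ in terms of all lower-order polynomials, which could be of independent interest; what yours buys is a shorter, cleaner derivation in which the ``missing equation'' insight you flag as the crux is exactly the right one.
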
	
	\begin{proof}
		According to Proposition~~\ref{prop:w-tilde}--\ref{prop:D}, we first obtain
		\begin{align}
			&Q_n^{B}(t)
			=\sum_{\pi \in \mathbb{Q}_n^B} t^{b(\pi)}
			=\sum_{\pi\in \mathbb{Q}_n^{B}, \, \pi^{\downarrow}\in \widetilde{\mathbb{Q}}_{n-1}^{B} } t^{b(\pi)}+\sum_{{\pi\in \mathbb{Q}_n^{B}}, \,  {\pi^{\downarrow}\in \mathbb{Q}_{n-1}^{B}}} t^{b(\pi)} \nonumber\\
			=&(1+t)\big{\{}(t^2-t)Q_{n-2}^{B}{'}(t)+(n-2)Q_{n-2}^B(t)\big{\}}+(nt+n-1)Q_{n-1}^B(t)+(1-t)\sum_{\pi'\in \mathbb{Q}_{n-1}^B \atop  {\overline{n-1} \in \pi'}} t^{b(\pi')}\nonumber\\
			=&(1+t)\big{\{}(t^2-t)Q_{n-2}^{B}{'}(t)+(n-2)Q_{n-2}^B(t)\big{\}}+(nt+n-1)Q_{n-1}^B(t)\nonumber\\  &+(1-t)\Big{\{}(n-1)tQ_{n-2}^B(t)+t\big{\{}(t^2-t)Q_{n-3}^{B}{'}(t)\nonumber+(n-3)Q_{n-3}^B(t)\big{\}}-t\sum_{\pi''\in \mathbb{Q}_{n-2}^B\atop \overline{n-2}\in \pi''} t^{b(\pi'')} \, \Big{\}}\nonumber .
		\end{align}
		Iterating using Proposition~\ref{prop:D} and using the fact that $\sum\limits_{\pi\in \mathbb{Q}_{1}^B ,\,  {\overline{1} \in \pi}} t^{b(\pi)}=t$, we have
		\begin{align}
			Q_n^{B}(t)\label{eq:e}	=  &(nt+n-1)Q_{n-1}^B(t)+(1-t)\Big{\{}\sum\limits_{k=1}^{n-2}(-1)^{k+1}(n-k)t^kQ_{n-k-1}^B(t)\Big{\}}\nonumber\\
			&+(-1)^n(1-t)t^{n-1}+(1+t)\big{\{}(t^2-t)Q_{n-2}^{B}{'}(t)+(n-2)Q_{n-2}^{B}(t)\big{\}}\nonumber\\
			&+(1-t)\Big\{\sum\limits_{k=1}^{n-2}(-1)^{k+1}t^k\big{\{}(t^2-t)Q_{n-k-2}^{B}{'}(t)+(n-k-2)Q_{n-k-2}^{B}(t)\big{\}}\Big\}\nonumber\\
			= &(nt+n-1)Q_{n-1}^B(t)+(2n-4)Q_{n-2}^{B}(t)+(-1)^n(1-t)t^{n-1}\nonumber\\
			&+\sum_{k=1}^{n-2}(-1)^{k}t^{k-1}\Big{\{}(n-k-1)+(2k+1-2n)t+(n-k)t^2\Big{\}}Q_{n-k-1}^{B}(t)\nonumber\\
			&+(t^3-t)Q_{n-2}^{B}{'}(t)+\sum_{k=1}^{n-2}(-1)^{k+1}t^{k+1}(2t-1-t^2)Q_{n-k-2}^{B}{'}(t).
		\end{align}
		The above second expression follows from collecting terms from the first one. Consequently, we have
		\begin{align*}
			Q_{n-1}^{B}(t)
			=&\big{[}(n-1)t+n-2\big{]}Q_{n-2}^{B}(t)+(2n-6)Q_{n-3}^{B}(t)+(-1)^{n-1}(1-t)t^{n-2}\\
			&+\sum_{k=1}^{n-3}(-1)^{k}t^{k-1}\Big{\{}(n-k-2)+(2k+3-2n)t+(n-k-1)t^2\Big{\}}Q_{n-k-2}^{B}(t)\\
			&+(t^3-t)Q_{n-3}^{B}{'}(t) +\sum_{k=1}^{n-3}(-1)^{k+1}t^{k+1}(2t-1-t^2)Q_{n-k-3}^{B}{'}(t).
		\end{align*}
		Then, it is observed that the two sums in the last expression of eq.~\eqref{eq:e} equals
		\begin{align*}
			(-t)\Big{\{}
			&Q_{n-1}^{B}(t)-\big{[}(n-1)t+n-2\big{]}Q_{n-2}^{B}(t)\\
			&-(2n-6)Q_{n-3}^{B}(t)-(-1)^{n-1}(1-t)t^{n-2}-(t^3-t)Q_{n-3}^{B}{'}(t)\Big{\}}.
		\end{align*}
		Plugging it into eq.~\eqref{eq:e} and simplifying completes the proof.
	\end{proof}
	
	Based on the obtained recursion eq.~\eqref{eq:qnt-recur}, the first few polynomials of $Q_n^B(t)$ are computed and listed below:	
	\begin{align*}
		Q_1^B(t)=&t+1\\
		Q_2^B(t)=&t^2+4t+1\\
		Q_3^B(t)=&3t^3+14t^2+14t+3\\
		Q_4^B(t)=&11t^4+64t^3+112t^2+64t+11\\
		Q_5^B(t)=&53t^5+362t^4+866t^3+866t^2+362t+53\\
		Q_6^B(t)=&309t^6+2428t^5+7252t^4+10300t^3+7252t^2+2428t+309\\
		Q_7^B(t)=&2119t^7+18806t^6+66854t^5+121838t^4+121838t^3+66854t^2+18806t+2119\\
		Q_8^B(t)=&16687t^8+165016t^7+677656t^6+1497880t^5+1937368t^4+1497880t^3\\&+677656t^2+165016t
		+16687\\
		Q_9^B(t)=&148329t^9+1616786t^8+7513658t^7+19444106t^6+30752450t^5+30752450t^4\\&+19444106t^3
		+7513658t^2+1616786t+148329
	\end{align*}

	\begin{corollary}\label{cor:dif-eq}
		Let $F(x,t)=\sum_{n\geq 1}Q_n^B(t)x^n$ be the generating function of $Q_n^{B}(t)$. Then,
		$F(0,t)=0$ and $F(x,t)$ satisfies the following differential equation: 
		
		\begin{small}
			\begin{align}
				&\frac{\partial F}{\partial t}(x,t)+\frac{t+1+3tx+x+2tx^2}{t(t^2-1)+2t^2(t-1)x}\frac{\partial F}{\partial x}(x,t) \nonumber \\
				=& \frac{-1-t-2tx}{t(t^2-1)x+2t^2(t-1)x^2} -\frac{tx^2-1}{t(t^2-1)x^2+2t^2(t-1)x^3}F(x,t).
			\end{align}
		\end{small}
	\end{corollary}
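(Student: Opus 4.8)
The plan is to convert the linear recursion of Theorem~\ref{main} into the claimed partial differential equation by the standard generating-function transform: multiply eq.~\eqref{eq:qnt-recur} by $x^n$ and sum over every $n$ for which it is valid, namely $n\geq 3$. The boundary condition $F(0,t)=0$ is immediate, since $F(x,t)=\sum_{n\geq 1}Q_n^B(t)x^n$ has no constant term. The dictionary I would use repeatedly is
\begin{align*}
\sum_{n\geq 1}Q_n^B(t)x^n=F,\qquad \sum_{n\geq 1}nQ_n^B(t)x^n=x\frac{\partial F}{\partial x},\qquad \sum_{n\geq 1}Q_n^B{'}(t)x^n=\frac{\partial F}{\partial t},
\end{align*}
together with the observation that a shift $Q_{n-j}^B$ acquires a prefactor $x^j$ after reindexing $m=n-j$, and that $Q_0^B(t)=0$ lets me extend sums down to index $0$ whenever convenient.

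Next I would process the five terms on the right-hand side of eq.~\eqref{eq:qnt-recur} individually. For a polynomial-in-$n$ coefficient I set $m=n-j$, so each factor of $n$ becomes a factor of $m$ (contributing $x\,\partial F/\partial x$) plus a constant (contributing $F$). Thus $(n-1)(t+1)Q_{n-1}^B$ yields $(t+1)x^2\,\partial F/\partial x$ up to a low-order correction, $\{(3n-5)t+(n-2)\}Q_{n-2}^B$ yields $x^2\big[(3t+1)x\,\partial F/\partial x+tF\big]$, $(2n-6)tQ_{n-3}^B$ yields $2tx^4\,\partial F/\partial x$, and the two derivative terms $(t^3-t)Q_{n-2}^B{'}$ and $2t^2(t-1)Q_{n-3}^B{'}$ yield $(t^3-t)x^2\,\partial F/\partial t$ and $2t^2(t-1)x^3\,\partial F/\partial t$. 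Collecting these, the coefficient of $\partial F/\partial t$ is exactly $x^2\big[t(t^2-1)+2t^2(t-1)x\big]$ and the coefficient of $\partial F/\partial x$ is $x^2\big[(t+1)+(3t+1)x+2tx^2\big]$, which already match the denominator and the $\partial F/\partial x$ numerator in the statement once one divides through by the $\partial F/\partial t$ coefficient.

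Finally I would gather all of $F$, $\partial F/\partial x$, $\partial F/\partial t$ on one side and divide by the coefficient $x^2\big[t(t^2-1)+2t^2(t-1)x\big]$ of $\partial F/\partial t$. The coefficient of $F$ and the inhomogeneous term come solely from the ``missing'' low-order data: on the left one subtracts $Q_1^B(t)x+Q_2^B(t)x^2=(1+t)x+(t^2+4t+1)x^2$, while the reindexing of the first term at the boundary produces the correction $-(1+t)^2x^2$ and the second term produces a surviving piece $tx^2F$. I expect the main obstacle to be exactly this bookkeeping of boundary corrections together with the ensuing simplification: one must check that $(t^2+4t+1)-(1+t)^2=2t$, so that the pure-power remainder collapses to $(1+t)x+2tx^2$, which after factoring out one $x$ gives the numerator $-1-t-2tx$ of the stated source term, and that the surviving $F$-coefficient $1-tx^2$ agrees with $-(tx^2-1)$. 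Once these cancellations are verified, dividing by the $\partial F/\partial t$ coefficient reproduces the displayed equation exactly, completing the proof.
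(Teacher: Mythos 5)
Your proposal is correct and follows essentially the same route as the paper's own proof in the appendix: multiply eq.~\eqref{eq:qnt-recur} by $x^n$, sum over $n\geq 3$, translate shifts $Q_{n-j}^B$ and factors of $n$ into powers of $x$ and $x\,\partial F/\partial x$ (and the primed terms into $\partial F/\partial t$), then do exactly the boundary bookkeeping you describe, including the correction $-(1+t)^2x^2$ from the first term, the surviving $tx^2F$, and the cancellation $(t^2+4t+1)-(1+t)^2=2t$ before dividing by the coefficient $x^2\big[t(t^2-1)+2t^2(t-1)x\big]$ of $\partial F/\partial t$. All of your intermediate coefficients and sign conventions agree with the paper's computation, so nothing further is needed.
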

	
	The proof of Corollary~\ref{cor:dif-eq} is provided in the appendix.
	Unfortunately, we are unable to solve the differential equation to get explicit formulas
	for $F(x,t)$ and $Q_n^B(t)$.

	\begin{corollary}
		Let $\pi \in \mathbb{Q}_n^B$ be chosen uniformly at random. Then,
		the expectation and variance of the number of signed elements $b(\pi)$ are respectively
		$$
		\mathrm{E}[b(\pi)]=\frac{n}{2}, \qquad \mathrm{Var}[b(\pi)]=F_n+\frac{2n-n^2}{4},
		$$
		where $F_n$ satisfies 
		\begin{align*}
			F_n
			&=\big{[}(n-1)^2+(2n-2)F_{n-1}\big{]}\frac{{Q_{n-1}^B}}{Q_n^B}+\big{[}(3n-2)(n-2)+(4n-3)F_{n-2}\big{]}\frac{{Q_{n-2}^B}}{Q_n^B}\\
			&\qquad +\big{[}(2n-2)(n-3)+(2n-2)F_{n-3}\big{]}\frac{{Q_{n-3}^B}}{Q_n^B}.
		\end{align*}
	\end{corollary}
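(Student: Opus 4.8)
The plan is to translate the probabilistic quantities into evaluations of $Q_n^B(t)$ and its derivatives at $t=1$, and then to exploit self-reciprocity together with the recursion of Theorem~\ref{main}. Since $\pi$ is uniform on $\mathbb{Q}_n^B$, the probability that $b(\pi)=m$ is $q_{n,m}/Q_n^B(1)$, so
$$\mathrm{E}[b(\pi)]=\frac{\sum_{m=0}^n m\,q_{n,m}}{\sum_{m=0}^n q_{n,m}}=\frac{(Q_n^B)'(1)}{Q_n^B(1)}.$$
Using the self-reciprocity $q_{n,m}=q_{n,n-m}$ and reindexing $m\mapsto n-m$ gives $\sum_m m\,q_{n,m}=\sum_m (n-m)\,q_{n,m}$, hence $2\sum_m m\,q_{n,m}=n\sum_m q_{n,m}$ and $\mathrm{E}[b(\pi)]=n/2$. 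In particular this records the identity $(Q_n^B)'(1)=\tfrac{n}{2}\,Q_n^B(1)$, which will be reused below.

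For the variance I would write $\mathrm{Var}[b(\pi)]=\mathrm{E}[b(\pi)^2]-\mathrm{E}[b(\pi)]^2$. Since $\sum_m m^2 q_{n,m}=(Q_n^B)''(1)+(Q_n^B)'(1)$ and $(Q_n^B)'(1)=\tfrac{n}{2}Q_n^B(1)$,
$$\mathrm{E}[b(\pi)^2]=\frac{(Q_n^B)''(1)}{Q_n^B(1)}+\frac{n}{2},\qquad \mathrm{Var}[b(\pi)]=\frac{(Q_n^B)''(1)}{Q_n^B(1)}+\frac{2n-n^2}{4}.$$
This identifies $F_n:=(Q_n^B)''(1)/Q_n^B(1)$ and matches the claimed closed form, so it remains only to verify the stated recursion for $F_n$.

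That recursion I would obtain by differentiating \eqref{eq:qnt-recur} twice in $t$ and evaluating at $t=1$. Writing $a_k=Q_k^B(1)$, $b_k=(Q_k^B)'(1)$, $c_k=(Q_k^B)''(1)$, the key simplification is that each prefactor carrying a derivative of $Q^B$, namely $t^3-t$ and $2t^2(t-1)$, vanishes at $t=1$; hence after the Leibniz rule the unwanted third derivatives $(Q_{n-2}^B)'''(1)$ and $(Q_{n-3}^B)'''(1)$ drop out, and only $b_{n-2},c_{n-2},b_{n-3},c_{n-3}$ survive. Collecting the contributions of the five terms on the right-hand side gives
$$c_n=2(n-1)(b_{n-1}+c_{n-1})+(6n-4)b_{n-2}+(4n-3)c_{n-2}+(4n-4)b_{n-3}+(2n-2)c_{n-3}.$$
Substituting $b_k=\tfrac{k}{2}a_k$ turns the $b$-terms into $(n-1)^2a_{n-1}$, $(3n-2)(n-2)a_{n-2}$ and $(2n-2)(n-3)a_{n-3}$; dividing through by $a_n$ and using $F_k=c_k/a_k$ produces exactly the stated three-term recursion, with base cases supplied by the small polynomials already computed.

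The computation is essentially routine; the only point demanding care is the bookkeeping in the double differentiation of the derivative-bearing terms $(t^3-t)(Q_{n-2}^B)'$ and $2t^2(t-1)(Q_{n-3}^B)'$, where the product rule must be tracked carefully. The vanishing of their prefactors at $t=1$ is precisely what makes the second moment close up into a finite-order recursion rather than dragging in ever higher derivatives of $Q^B$, and is the structural fact that drives the whole argument.
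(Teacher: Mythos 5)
Your proposal is correct and follows essentially the same route as the paper: the symmetry $q_{n,m}=q_{n,n-m}$ for $\mathrm{E}[b(\pi)]=n/2$, the reduction of the variance to $F_n={Q_n^B}''(1)/Q_n^B(1)$, and twice differentiating eq.~\eqref{eq:qnt-recur} at $t=1$ (where the paper implicitly uses the substitution ${Q_k^B}'(1)=\tfrac{k}{2}Q_k^B(1)$ that you make explicit). Your intermediate identity $c_n=2(n-1)(b_{n-1}+c_{n-1})+(6n-4)b_{n-2}+(4n-3)c_{n-2}+(4n-4)b_{n-3}+(2n-2)c_{n-3}$ matches the paper's displayed formula for ${Q_n^B}''(1)$ exactly, and your observation that the vanishing of $t^3-t$ and $2t^2(t-1)$ at $t=1$ eliminates the third derivatives is precisely why the recursion closes.
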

	\begin{proof}
		Recall that $q_{n,m}=q_{n,n-m}$, and it is easy to see
		\begin{align*}
			Q_n^B(1)&=\sum\limits_{m=0}^{n}q_{n,m},\\
			{Q_n^B}'(t)&=\sum\limits_{m=0}^{n}mq_{n,m}t^{m-1}, \quad {Q_n^B}'(1)=\sum\limits_{m=0}^{n}mq_{n,m},\\
			{Q_n^B}''(t)& =\sum\limits_{m=0}^{n}m(m-1)q_{n,m}t^{m-2}, \quad {Q_n^B}''(1)=\sum\limits_{m=0}^{n}m(m-1)q_{n,m}.
		\end{align*}
		Consequently, we have
		\begin{align*}
			\mathrm{E}[b(\pi)]=\frac{\sum\limits_{m=0}^{n}mq_{n,m}}{\sum\limits_{m=0}^{n}q_{n,m}}{=\frac{{Q_n^B}'(1)}{{Q_n^B}(1)}=\frac{\sum\limits_{m=0}^{n}(m+n-m)q_{n,m}/2}{\sum\limits_{m=0}^{n}q_{n,m}}}=\frac{n}{2}.
		\end{align*}
		As for the variance, we compute	
		\begin{align*}
			\mathrm{Var}[b(\pi)]
			&=\frac{\sum\limits_{m=0}^{n}(m-\mathrm{E}[b(\pi)])^2q_{n,m}}{{\sum\limits_{m=0}^{n}q_{n,m}}}\\
			&=\frac{\sum\limits_{m=0}^{n}m^2q_{n,m}+\sum\limits_{m=0}^{n}{\mathrm{E}[b(\pi)]}^2q_{n,m}-2\sum\limits_{m=0}^{n}m\mathrm{E}[b(\pi)]q_{n,m}}{Q_n^B{(1)}}\\
			&=\frac{\sum\limits_{m=0}^{n}\big{[}m(m-1)+m\big{]}q_{n,m}+\sum\limits_{m=0}^{n}{\mathrm{E}[b(\pi)]}^2q_{n,m}-2\sum\limits_{m=0}^{n}m\mathrm{E}[b(\pi)]q_{n,m}}{Q_n^B{(1)}}\\
			&=\frac{{Q_n^B}''(1)+{Q_n^B}'(1)+\mathrm{E}[b(\pi)]^2{Q_n^B}(1)-2\mathrm{E}[b(\pi)]{Q_n^B}'(1)}{{Q_n^B{(1)}}}\\
			&=\frac{{Q_n^B}''(1)}{{Q_n^B{(1)}}}+\frac{2n-n^2}{4}.
		\end{align*}	
		From Theorem~\ref{main}, we next get 
		\begin{align*}
			{Q_n^B}''(1)
			=&(2n-2){Q_{n-1}^B}'(1)+(2n-2){Q_{n-1}^B}''(1)+(6n-4){Q_{n-2}^B}'(1)+(4n-3){Q_{n-2}^B}''(1)\\
			&+(4n-4){Q_{n-3}^B}'(1)+(2n-2){Q_{n-3}^B}''(1).
		\end{align*}
		By dividing both sides by $Q_n^B{(1)}$, the following recurrsion of $F_n=\frac{{Q_n^B}''(1)}{{Q_n^B}(1)}$ can be obtained:
		\begin{align*}
			F_n
			&=\big{[}(n-1)^2+(2n-2)F_{n-1}\big{]}\frac{{Q_{n-1}^B}(1)}{Q_n^B(1)}+\big{[}(3n-2)(n-2)+(4n-3)F_{n-2}\big{]}\frac{{Q_{n-2}^B}(1)}{Q_n^B(1)}\\
			&+\big{[}(2n-2)(n-3)+(2n-2)F_{n-3}\big{]}\frac{{Q_{n-3}^B}(1)}{Q_n^B(1)}.
		\end{align*}
		This completes the proof.
	\end{proof}
	
	The following corollary follows from Theorem~\ref{main} as well.
	
	\begin{corollary}\label{coro}
		For $n\geq 3 $ and $m\geq 0$, we have
		\begin{align}
			Q_n\label{eq:f}
			=&(n-1)Q_{n-1}+(n-2)Q_{n-2},\\
			Q_n^B\label{eq:g} =&(2n-1)Q_{n-1}^B+(2n-4)Q_{n-2}^B, \\
			q_{n,m}\label{eq:h}
			=&(n-1)q_{n-1,m-1}+(n-1)q_{n-1,m}+(m-2)q_{n-2,m-2}+(3n-5)q_{n-2,m-1}\nonumber\\
			&+(n-m-2)q_{n-2,m}+(2m-4)q_{n-3,m-2}+(2n-2m-4)q_{n-3,m-1},
		\end{align}
		where we make the convention that $q_{n,m}=0$ if $m<0$.
	\end{corollary}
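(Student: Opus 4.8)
The plan is to derive all three identities directly from the polynomial recursion eq.~\eqref{eq:qnt-recur} of Theorem~\ref{main}, using three different operations on it: evaluation at $t=0$, evaluation at $t=1$, and extraction of the coefficient of $t^m$. The three starting observations are that $Q_n=Q_n^B(0)$ (a relative derangement is precisely a signed relative derangement with no bar-elements), that $Q_n^B=Q_n^B(1)$, and that $q_{n,m}=[t^m]Q_n^B(t)$, with $Q_n^B{'}(t)=\sum_m m\,q_{n,m}t^{m-1}$. For eq.~\eqref{eq:f} I would simply set $t=0$ in eq.~\eqref{eq:qnt-recur}: every summand carrying a factor of $t$ vanishes, so the terms $(t^3-t)Q_{n-2}^B{'}(t)$, $(2n-6)t\,Q_{n-3}^B(t)$ and $2t^2(t-1)Q_{n-3}^B{'}(t)$ all die, while $(n-1)(t+1)$ becomes $n-1$ and $(3n-5)t+(n-2)$ becomes $n-2$. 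This yields $Q_n=(n-1)Q_{n-1}+(n-2)Q_{n-2}$ immediately.

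For eq.~\eqref{eq:g} I would set $t=1$. The two derivative terms again disappear, since $t^3-t$ and $2t^2(t-1)$ both vanish at $t=1$, and the surviving coefficients collapse to the three-term recursion
\[
Q_n^B=(2n-2)Q_{n-1}^B+(4n-7)Q_{n-2}^B+(2n-6)Q_{n-3}^B.
\]
The only nonroutine step of the whole corollary is to turn this into the two-term relation eq.~\eqref{eq:g}, and I expect this to be the main obstacle. I would handle it by induction on $n$: assuming eq.~\eqref{eq:g} at index $n-1$, namely $Q_{n-1}^B=(2n-3)Q_{n-2}^B+(2n-6)Q_{n-3}^B$, I can solve for $(2n-6)Q_{n-3}^B$ and substitute it into the displayed three-term recursion; the $Q_{n-3}^B$ term is eliminated and the coefficients of $Q_{n-1}^B$ and $Q_{n-2}^B$ simplify exactly to $2n-1$ and $2n-4$. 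The base cases are checked against the explicitly listed polynomials, e.g.\ $Q_1^B(1)=2$, $Q_2^B(1)=6$, $Q_3^B(1)=34$ with $34=5\cdot 6+2\cdot 2$.

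For eq.~\eqref{eq:h} I would extract the coefficient of $t^m$ on both sides of eq.~\eqref{eq:qnt-recur}, the polynomial prefactors acting by index shifts. The term $(n-1)(t+1)Q_{n-1}^B$ contributes $(n-1)(q_{n-1,m-1}+q_{n-1,m})$, and $\{(3n-5)t+(n-2)\}Q_{n-2}^B$ contributes $(3n-5)q_{n-2,m-1}+(n-2)q_{n-2,m}$. The care lies in the derivative terms: writing $(t^3-t)Q_{n-2}^B{'}(t)=\sum_m m\,q_{n-2,m}(t^{m+2}-t^m)$ gives $(m-2)q_{n-2,m-2}-m\,q_{n-2,m}$, while $2t^2(t-1)Q_{n-3}^B{'}(t)=2\sum_m m\,q_{n-3,m}(t^{m+2}-t^{m+1})$ gives $2(m-2)q_{n-3,m-2}-2(m-1)q_{n-3,m-1}$, and $(2n-6)t\,Q_{n-3}^B$ gives $(2n-6)q_{n-3,m-1}$. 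Collecting like coefficients, the $q_{n-2,m}$ contributions combine to $(n-2)-m=n-m-2$ and the $q_{n-3,m-1}$ contributions to $(2n-6)-2(m-1)=2n-2m-4$, which is exactly eq.~\eqref{eq:h}; the convention $q_{n,m}=0$ for $m<0$ absorbs the boundary indices. Apart from the three-term to two-term reduction in the middle part, every step here is routine substitution and bookkeeping.
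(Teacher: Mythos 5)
Your proposal is correct and takes essentially the same route as the paper, whose proof of this corollary is exactly the three operations you perform: setting $t=0$, setting $t=1$, and equating coefficients of $t^m$ in eq.~\eqref{eq:qnt-recur}. Your coefficient extraction for eq.~\eqref{eq:h} is verifiably right, including the index shifts from the derivative terms, which produce $(m-2)q_{n-2,m-2}-m\,q_{n-2,m}$ and $2(m-2)q_{n-3,m-2}-2(m-1)q_{n-3,m-1}$ and hence the combined coefficients $n-m-2$ and $2n-2m-4$. The one place where you go beyond the paper is eq.~\eqref{eq:g}, and you are right to flag it: setting $t=1$ literally yields only the three-term recursion $Q_n^B=(2n-2)Q_{n-1}^B+(4n-7)Q_{n-2}^B+(2n-6)Q_{n-3}^B$, and the paper's one-line proof silently elides the passage to the stated two-term form. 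Your induction, substituting $(2n-6)Q_{n-3}^B=Q_{n-1}^B-(2n-3)Q_{n-2}^B$ from the hypothesis at index $n-1$ so that the coefficients collapse to $2n-1$ and $2n-4$ (with the base case $34=5\cdot 6+2\cdot 2$ checked), is a valid and self-contained bridge. The paper supplies that missing step only through external routes, remarking that eq.~\eqref{eq:g} also follows from the recursions for $D_n^B$ together with eq.~\eqref{eq:c}, or from Lemma~\ref{lem:Q-f} combined with eq.~\eqref{chen-19}; your argument buys the same conclusion from eq.~\eqref{eq:qnt-recur} alone.
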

	\begin{proof}
		Eq.~\eqref{eq:f} and~\eqref{eq:g} follow from eq.~\eqref{eq:qnt-recur} by setting $t=0$ and $t=1$, respectively.
		Eq.~\eqref{eq:h} is obtained by equating the coefficients of $t^m$ on both sides of eq.~\eqref{eq:qnt-recur}
	\end{proof}
	It is easy to see that the case $m=0$ of eq.~\eqref{eq:h} agrees with eq.~\eqref{eq:f}.
	Of course, eq.~\eqref{eq:f} and~\eqref{eq:g} can be also obtained by making use of the recursions satisfied by
	$D_n$, $D_n^B$, eq.~\eqref{eq:b} and eq.~\eqref{eq:c}. We leave the computation to the interested reader.	
	In the next section, we will present a direct combinatorial proof of the recursion of $q_{n,m}$.
	
	\section{Recursion and unimodality of $q_{n,m}$}\label{Sec3}
	The goal of this section is to first prove the recursion of $q_{n,m}$ combinatorially,
	and then prove the sequence of $q_{n,m}$ is unimodal.

	Before we proceed, we present a connection to the work of the first author~\cite{ref3}
	using a slight variation of signed relative derangements.
	Recall the definitions there:
	Let 
	$$
	\Gamma_n=\{(0,-1),(-1,0),(1,-2),(-2,1),\ldots,(n,-n-1),(-n-1,n)\}
	$$
	be a set of ordered pairs. For an ordered pair $T=(a,b)$, the element $a$ is called the left entry of $T$ and denoted by $T^l=a$, while $b$ the right entry of $T$ and denoted by $T^r=b$. A signed relative derangement (SRD) on $\Gamma_n$ is a sequence $\pi=T_0T_1\cdots T_n$ such that $T_i\in \Gamma_n$, each ordered pair appears at most once in $\pi$, $(a,b)\in\Gamma_n$ and $(b,a)\in\Gamma_n$ cannot be both contained in $\pi$, and for $0\leq i\leq n-1$, $T_i^r\neq-T_{i+1}^l$.
	This particular form for SRDs was chosen for a reason, as SRDs were also treated as fixed point involutions in~\cite{ref3}.
	As such, the first author could provide an upper bound for the number of signed permutations whose reversal distances are maximum possible.
	
	An SRD of type 1 on $\Gamma_n$ is an SRD $\pi=T_0T_1T_2\cdots T_n$ such that $T_0=(0,-1)$ and $T_n\neq (n,-n-1)$. An SRD of type 2 on $\Gamma_n$ is an SRD $\pi=T_0T_1T_2\cdots T_n$ such that $T_0=(0,-1)$ and $T_n= (n,-n-1)$.
	Let $f_n$ and $\hat f_n$ denote the number of SRDs of type 1 and type 2 on $\Gamma_n$, respectively.
	Clearly, $\hat f_n=f_{n-1}$. One of the main results in Chen~\cite{ref3} is the four-term recursion below
	\begin{equation}\label{chen-19}
		f_n=(2n-2)f_{n-1}+(4n-3)f_{n-2}+(2n-2)f_{n-3}, \quad (n\geq 4)
	\end{equation}
	where $f_1=1,f_2=4,f_3=25$. 
	
	Following~\cite{ref3}, we have known that there is a natural bijection for transfroming SRDs on $\Gamma_n$ to the signed relative derangements in the classical definition. That is, just view $(i,-i-1)$ as $i$ and $(-i-1,i)$ as $\overline{i}$. But it is worth noting that the condition now becomes that $i$ is not followed by $i+1$ and $\overline{i+1}$ is not followed by $\overline{i}$.
	Sometimes it is more convenient to use this definition. For instance,
	let $\pi^{[r]}$ denote the sequence obtained from $\pi$ by reading $\pi$ reversely (i.e., right to left) and
	changing $i$ to $\overline{i}$ and vice versa.
	Then, if $\pi$ is an SRD, then $\pi^{[r]}$ is also an SRD. For example, for an SRD $\pi=\overline{2} \overline{3} 1 0$, $\pi^{[r]}=\overline{0} \overline{1} 3 2$ is an SRD too.
	We refer to $\pi^{[r]}$ as the conjugate-reverse of $\pi$.
	This is not true in the classical definition. For example, for a signed relative derangement $\pi=\overline{3} \overline{2} 1 0$, $\pi^{[r]}=\overline{0} \overline{1} 2 3$ is not a signed relative derangement anymore in the classical definition.
	In the following, we will use the new version of SRDs if not explicitly stated otherwise.
	
	\begin{lemma}\label{lem:Q-f}
		For $n\geq3$,
		\begin{align}\label{eq:i}
			Q_n^B=(f_n+f_{n-1})+(f_{n-1}+f_{n-2}).
		\end{align}
		
	\end{lemma}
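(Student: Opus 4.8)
The plan is to give a bijective proof, working throughout with the new version of SRDs, under which $Q_n^B$ counts the signed relative derangements on any block of $n$ consecutive integers subject to the conditions that $i$ is not followed by $i+1$ and $\overline{i+1}$ is not followed by $\overline i$; that the new-version count agrees with the classical one is the standing convention of this section. I would first reinterpret the right-hand side in these terms. Since $\hat f_m=f_{m-1}$, the sum $f_m+f_{m-1}=f_m+\hat f_m$ is the number of SRDs on $\Gamma_m$ of type $1$ together with those of type $2$, that is, the number of SRDs on $\Gamma_m$ whose first pair is $(0,-1)$. Under the identification $(i,-i-1)\mapsto i$, $(-i-1,i)\mapsto\overline i$, these are precisely the new-version SRDs on $\{0,1,\dots,m\}$ whose first entry is the unsigned $0$. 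Writing $P_m:=f_m+f_{m-1}$ for this number, the assertion becomes $Q_n^B=P_{n-1}+P_n$.

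I would realize $Q_n^B$ as the number of new-version SRDs on the block $\{0,1,\dots,n-1\}$ and split this set according to the first entry. The SRDs whose first entry is the unsigned $0$ are, by the previous paragraph (with $m=n-1$, since $\Gamma_{n-1}$ is built on $\{0,1,\dots,n-1\}$), counted by $P_{n-1}$. For those whose first entry is not the unsigned $0$ I would build a bijection onto the new-version SRDs on $\{0,1,\dots,n\}$ that begin with the unsigned $0$ --- a set of size $P_n$. Given such a $\sigma=\sigma_0\sigma_1\cdots\sigma_{n-1}$ with $\sigma_0\neq 0$ (unsigned), form
\[
\sigma'=0\,(\sigma_0{+}1)(\sigma_1{+}1)\cdots(\sigma_{n-1}{+}1),
\]
where $x{+}1$ raises the magnitude of $x$ by one and keeps its sign. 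The forbidden adjacencies depend only on the signs and on magnitude differences, hence are invariant under this uniform shift, so the block $(\sigma_0{+}1)\cdots(\sigma_{n-1}{+}1)$ inherits the SRD property from $\sigma$. The only fresh adjacency is at the junction $0,(\sigma_0{+}1)$, and it is forbidden exactly when $\sigma_0{+}1$ is the unsigned $1$, i.e.\ when $\sigma_0$ is the unsigned $0$ --- which is what we excluded; consequently $\sigma'$ is a new-version SRD beginning with the unsigned $0$.

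The inverse deletes the leading $0$ and lowers every magnitude by one. If $\tau=0\,\tau_1\cdots\tau_n$ is a new-version SRD beginning with the unsigned $0$, the junction forces $\tau_1$ to differ from the unsigned $1$, so after deleting $0$ and shifting down the first entry is not the unsigned $0$ and the result lies in the domain; shift-invariance again preserves the SRD property. Thus the two maps are mutually inverse, and the SRDs counted by $Q_n^B$ that do not begin with the unsigned $0$ number exactly $P_n$. Adding the two parts yields
\[
Q_n^B=P_{n-1}+P_n=(f_{n-1}+f_{n-2})+(f_n+f_{n-1}),
\]
which is the claim.

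I expect the delicate point to be the junction analysis together with the index bookkeeping: one must check that the single condition ``the first entry is not the unsigned $0$'' matches exactly the obstruction created by prepending $0$, and that because $\Gamma_m$ lives on the $m+1$ values $\{0,\dots,m\}$ the two summands come out as $P_{n-1}$ and $P_n$ rather than shifted copies. A cleaner but less transparent alternative would be to verify, using the recursion \eqref{chen-19} for $f_n$, that $f_n+2f_{n-1}+f_{n-2}$ obeys the two-term recursion \eqref{eq:g} for $Q_n^B$ with matching initial values; I would keep the bijection as the primary argument, since it explains the grouping $(f_n+f_{n-1})+(f_{n-1}+f_{n-2})$ appearing in the statement.
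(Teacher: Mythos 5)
Your proof is correct and follows essentially the same route as the paper's: both split the signed relative derangements according to whether the first entry is the smallest unsigned element, identifying the class that does not start with it with the type-$1$ and type-$2$ SRDs on $\Gamma_n$ (counted by $f_n+f_{n-1}$) and the class that does with the corresponding SRDs one size down (counted by $f_{n-1}+f_{n-2}$). The only difference is presentational: you work on the block $\{0,1,\dots,n-1\}$ and spell out the prepend-and-shift bijection with its junction analysis, which the paper compresses into ``as discussed above'' and ``it is then not hard to see.''
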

	\begin{proof}
		
		The elements $\pi_1\pi_2 \cdots \pi_n$ in $\mathbb{Q}_n^B$ consist of two classes: $\pi_1=1$ and $\pi_1 \neq 1$.
		The latter is equivalent to SRDs of type $1$ and type $2$ and counted by $f_n+\hat{f_n}=f_n+f_{n-1}$ as discussed above.
		As for those starting with $1$, the subsequence $\pi_2\cdots \pi_n$ must not start with
		$2$. It is then not hard to see that this class is counted by $f_{n-1}+f_{n-2}$,
		completing the proof. 		
	\end{proof}
	
	In view of Lemma~\ref{lem:Q-f}, the {`}core{'} of $\mathbb{Q}_n^B$ is really the subset of sequences not starting with 1.
	Also, recall that $Q_n^B=D_n^B+D_{n-1}^B$ obtained by Chen and Zhang~\cite{ref5}.
	Accordingly, it suggests the following relation which can be viewed as a dual of this relation.
	
	\begin{proposition}[Dual of eq.~\eqref{eq:c}]\label{dual} 
		For $n\geq2$, we have
		\begin{align}
			D_n^B=f_n+f_{n-1}.
		\end{align}
	\end{proposition}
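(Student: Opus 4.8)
The plan is to derive the identity directly from the two independent expressions for $Q_n^B$ that are now available, via a one-line induction on $n$. On the one side, the relation of Chen and Zhang, eq.~\eqref{eq:c}, reads $Q_n^B=D_n^B+D_{n-1}^B$. On the other side, Lemma~\ref{lem:Q-f} gives, for $n\geq 3$, $Q_n^B=(f_n+f_{n-1})+(f_{n-1}+f_{n-2})$. Equating the two supplies the master identity
\[
D_n^B+D_{n-1}^B=(f_n+f_{n-1})+(f_{n-1}+f_{n-2}),\qquad n\geq 3,
\]
whose right-hand side is already split into exactly the two blocks $f_n+f_{n-1}$ and $f_{n-1}+f_{n-2}$ that the claim predicts for $D_n^B$ and $D_{n-1}^B$.

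First I would settle the base case $n=2$ by direct enumeration. The signed derangements of $[2]$ are $\overline{1}\,\overline{2}$, $2\,1$, $\overline{2}\,1$, $2\,\overline{1}$, and $\overline{2}\,\overline{1}$, so $D_2^B=5$; since $f_1=1$ and $f_2=4$, we have $f_2+f_1=5=D_2^B$, and the claim holds at $n=2$. For the inductive step, fix $n\geq 3$ and assume $D_{n-1}^B=f_{n-1}+f_{n-2}$. Substituting this into the master identity and cancelling the common block $f_{n-1}+f_{n-2}$ leaves $D_n^B=f_n+f_{n-1}$, which closes the induction and proves the proposition for all $n\geq 2$.

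The only point demanding care is the alignment of index ranges: Lemma~\ref{lem:Q-f} is established only for $n\geq 3$, so the induction must be launched from the hand-checked case $n=2$, and eq.~\eqref{eq:c} is invoked only for $n\geq 3$; no quantity is ever used outside its stated range. I do not expect a genuine obstacle on this route. A more ambitious and more illuminating alternative would be to exhibit an explicit bijection between the signed derangements of $[n]$ and the SRDs on $\Gamma_n$ that begin with $(0,-1)$ --- equivalently, the new-version SRDs on $\{0,1,\dots,n\}$ starting with the unbarred $0$, which by definition number $f_n+\hat f_n=f_n+f_{n-1}$ --- presumably modeled on the signed skew derangements used by Chen and Zhang to establish eq.~\eqref{eq:c}. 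Constructing and verifying such a map would be the main effort in that approach, whereas the telescoping induction above dispatches the statement with essentially no computation.
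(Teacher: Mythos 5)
Your proof is correct, but it takes a genuinely different route from the paper's. You equate the two available expressions for $Q_n^B$ --- the Chen--Zhang relation eq.~\eqref{eq:c} and Lemma~\ref{lem:Q-f} --- and then telescope by induction from the hand-checked base $D_2^B=5=f_2+f_1$; the index bookkeeping you flag is handled correctly (the master identity is needed only for $n\geq 3$, exactly where Lemma~\ref{lem:Q-f} is available, and the step at $n=3$ uses the base case). The paper does something more self-contained: it first gives a direct combinatorial proof of the type-$B$ recursion $D_n^B=(2n-1)D_{n-1}^B+(2n-2)D_{n-2}^B$ (eq.~\eqref{eq:der-B}, the analogue of eq.~\eqref{eq:a}), then shows via the four-term recursion eq.~\eqref{chen-19} that $F_n=f_n+f_{n-1}$ satisfies the same recursion, and matches the initial values $D_2^B=F_2=5$, $D_3^B=F_3=29$. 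The trade-off: your argument is shorter and essentially computation-free, but it makes Proposition~\ref{dual} a formal consequence of eq.~\eqref{eq:c}, so the ``dual'' carries no information beyond Lemma~\ref{lem:Q-f} on your route; the paper's proof establishes the dual independently of eq.~\eqref{eq:c} (using only eq.~\eqref{chen-19} from prior work), which means that, combined with Lemma~\ref{lem:Q-f}, it actually re-derives the Chen--Zhang relation rather than consuming it, and it yields the combinatorial proof of eq.~\eqref{eq:der-B} as a byproduct. Your suggested bijective alternative (signed derangements of $[n]$ versus SRDs on $\Gamma_n$ counted by $f_n+\hat f_n$) is indeed the more illuminating open direction, and is not carried out in the paper either.
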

	\begin{proof}
		First, we take the opportunity to present a direct combinatorial proof of a recursion of $D_n^B$ which is an analogue of eq.~\eqref{eq:a}. Consider signed derangements of length $n$ in $\mathbb{D}_n^B$. We
		distinguish the following cases.\\
		\textbf{\textit{case~$1$:}} If $1$ appears, it can be placed at any other $n-1$ positions except the first position. Suppose $1$ is placed at the $k$-th position for a fixed $1<k \leq n$, then we consider the elements $k$ and $\overline{k}$.
		\begin{itemize}
			\item If $k$ is placed at the first position, the remaining $n-2$ entries (other than the first and the $k$-th entries) could essentially form any signed derangement
			of length $n-2$. Then, we have $ D_{n-2}^B$ signed derangements in this case. 
			\item If $k$ is not placed at the first position (note that $\overline{k}$ could still be placed at the first position), viewing $k$ as $1$ (and $\overline{k}$ as $\overline{1}$), the remaining $n-1$ entries other than the $k$-th entry essentially
			form a signed derangement of length $n-1$.
			Hence, there are $D_{n-1}^B$ signed derangements in this case.
		\end{itemize}
		Since there are $n-1$ options for $k$, we have $(n-1)(D_{n-2}^B+D_{n-1}^B)$ signed derangements where $1$ appears.\\
		\textbf{\textit{case~$2$:}} Consider the case $\overline{1}$ appears. 
		\begin{itemize}
			\item Clearly, there are $D_{n-1}^B$ signed derangements where $\overline{1}$ is placed at the first position.
			\item If $\overline{1}$ is not placed at the first position, in analogy with case~$1$, we have $(n-1)(D_{n-2}^B+D_{n-1}^B)$ such signed derangements.
		\end{itemize}
		
		Summarizing the above discussion, we have
		\begin{align}\label{eq:der-B}
			D_n^B=(2n-1)D_{n-1}^B+(2n-2)D_{n-2}^B.
		\end{align}	
		Next, let $F_n=f_n+f_{n-1}$. Applying the four-term recurrence eq.~\eqref{chen-19}, we have
		\begin{align*}
			F_n
			&=(2n-1)f_{n-1}+(4n-3)f_{n-2}+(2n-2)f_{n-3}\\
			&=(2n-1)F_{n-1}+(2n-2)F_{n-2}.
		\end{align*}
		That is, $D_n^B$ and $F_n$ satisfy the same recursion.
		Meanwhile, we have $D_2^B=F_2=5$, $D_3^B=F_3= 29$. Therefore, $D_n^B$ and $f_n+f_{n-1}$ also have the same initial values.
		Thus, it is proved that $D_n^B=f_n+f_{n-1}$.
	\end{proof}
	
	We remark that eq.~\eqref{eq:der-B} can be found in~\cite{ref2}, but with a different proof.
	Combining eq.~\eqref{eq:i} and eq.~\eqref{chen-19}, we immediately have an alternative proof of eq.~\eqref{eq:g}.
	
	Now we are in a position to prove the recursion eq.~\eqref{eq:h}.
	Let $\overline{q}_{n,m}$ denote the number of $\pi=\pi_1 \pi_2 \cdots \pi_n\in\mathbb{Q}_n^B$ with $m$ bar-elements and $\pi_1\neq 1$.
	Equivalently, $\overline{q}_{n,m}$ counts SRDs of type $1$ and $2$ on $\Gamma_n$ that
	have $m$ bar-elements. We first have the following
	relation which is an analogue of eq.~\eqref{eq:i}.
	
	\begin{lemma}\label{reduce}
		For $n>0$ and $0 \leq m \leq n$,
		\begin{align}
			q_{n,m}=\overline{q}_{n,m}+\overline{q}_{n-1,m}.
		\end{align}
	\end{lemma}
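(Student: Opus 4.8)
The plan is to refine the two-class split used in the proof of Lemma~\ref{lem:Q-f}, now keeping track of the number of bar-elements. Every $\pi = \pi_1\pi_2\cdots\pi_n \in \mathbb{Q}_n^B$ with exactly $m$ bar-elements falls into exactly one of two classes according to whether $\pi_1 = 1$ (the unsigned $1$ sitting in the first position) or $\pi_1 \neq 1$. By definition the second class contributes $\overline{q}_{n,m}$, so it suffices to show that the first class --- sequences in $\mathbb{Q}_n^B$ with $m$ bar-elements and $\pi_1 = 1$ --- is counted by $\overline{q}_{n-1,m}$.

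To this end I would exhibit an explicit bijection $\phi$ from $\{\pi \in \mathbb{Q}_n^B : \pi_1 = 1\}$ onto $\{\sigma \in \mathbb{Q}_{n-1}^B : \sigma_1 \neq 1\}$. Given $\pi = 1\,\pi_2\cdots\pi_n$, delete the leading unsigned $1$ and decrease every remaining entry by one, respecting signs via the rule $\overline{i}+1 = \overline{i+1}$; that is, set $\sigma_k = \pi_{k+1}-1$ for $1 \leq k \leq n-1$, so an unsigned $j$ becomes $j-1$ and a bar-element $\overline{j}$ becomes $\overline{j-1}$. Since $\pi_2,\ldots,\pi_n$ have absolute values exactly $\{2,\ldots,n\}$, the sequence $\sigma = \sigma_1\cdots\sigma_{n-1}$ is a signed permutation of $[n-1]$, and because the deleted $1$ is unsigned, $b(\sigma) = b(\pi) = m$.

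Next I would verify that $\phi$ lands in $\mathbb{Q}_{n-1}^B$ and that its image has first entry $\neq 1$. The adjacency conditions are preserved under a uniform downshift: a consecutive pair $(\sigma_k,\sigma_{k+1}) = (\pi_{k+1}-1,\pi_{k+2}-1)$ realizes the forbidden pattern $i,i+1$ (resp.\ $\overline{i},\overline{i+1}$) precisely when $(\pi_{k+1},\pi_{k+2})$ realizes $i+1,i+2$ (resp.\ $\overline{i+1},\overline{i+2}$), which is forbidden in $\pi$; hence $\sigma \in \mathbb{Q}_{n-1}^B$. Moreover $\sigma_1 = \pi_2 - 1 = 1$ would force $\pi_2 = 2$, contradicting that in $\pi$ the leading $1$ cannot be followed by $2$, so $\sigma_1 \neq 1$. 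The inverse prepends an unsigned $1$ and raises every entry by one (again respecting signs); the requirement $\sigma_1 \neq 1$ is exactly what guarantees the resulting sequence avoids the pattern $1,2$ at the front, so the two maps are mutually inverse and $\phi$ is a $b$-preserving bijection. For $n = 1$ the unique $\pi$ with $\pi_1 = 1$ maps to the empty sequence, consistent with the convention $\overline{q}_{0,0} = 1$.

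Combining the two classes then yields $q_{n,m} = \overline{q}_{n,m} + \overline{q}_{n-1,m}$. The verification is essentially routine bookkeeping; the only point requiring care --- and the step I expect to be the genuine, if mild, obstacle --- is the boundary condition: confirming that the downshift forces $\sigma_1 \neq 1$, and conversely that $\sigma_1 \neq 1$ is exactly what the upshift needs in order to produce a valid signed relative derangement led by the unsigned $1$. Everything else reduces to the observation that a global shift by one commutes with the forbidden patterns ``$i$ followed by $i+1$'' and ``$\overline{i}$ followed by $\overline{i+1}$''.
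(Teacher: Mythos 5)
Your proof is correct and takes essentially the same approach as the paper: the same split of $\mathbb{Q}_n^B$ according to whether $\pi_1=1$ or $\pi_1\neq 1$, with the first class identified with the sequences counted by $\overline{q}_{n-1,m}$. The paper's one-line proof leaves the delete-the-leading-$1$-and-shift-down relabeling implicit, together with the boundary check that $\pi_2\neq 2$ forces the shifted first entry to differ from $1$; you simply make that bijection and its inverse explicit.
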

	\begin{proof}
		For any $\pi\in \mathbb{Q}_n^B$ with $m$ bar-elements, $\pi$ is either in the form $\pi_1\pi_2\cdots\pi_n$ where $\pi_1\neq1$ or $1\pi_2\cdots\pi_n$. The number of the former is just $\overline{q}_{n,m}$. And the number of the latter is equal to the number of
		$\pi_2\cdots\pi_n$ where $\pi_2\neq2$, namely $\overline{q}_{n-1,m}$, whence the lemma.
	\end{proof}

	In the light of Lemma~\ref{reduce}, in order for studying $q_{n,m}$ it suffices to study $\overline{q}_{n,m}$.
	To that end, we generalize the idea for proving eq.~\eqref{chen-19} in~\cite{ref3} and
	obtain

	\begin{theorem}\label{fnk}
		For $n\geq {3}$ and $0 \leq m \leq n$, we have
		\begin{align}
			\overline{q}_{n,m}
			=&(n-1)\overline{q}_{n-1,m}+(n-m-1)\overline{q}_{n-2,m}+(m-1)\overline{q}_{n-2,m-1}+n\overline{q}_{n-1,n-m}\nonumber\\
			&+(m-1)\overline{q}_{n-2,n-m}+(n-m-1)\overline{q}_{n-2,n-m-1},
		\end{align}
		where $\overline{q}_{x,y}=0$ if $y<0$ or {$y>x$, and the initial values here are determined by the coefficients of
			$Q_1^B(t)$ and $Q_2^B(t)$ in Theorem~\ref{main}}.
	\end{theorem}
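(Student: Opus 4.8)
The plan is to replay the combinatorial reduction of~\cite{ref3} that proves eq.~\eqref{chen-19}, but to carry along the statistic $m=b(\pi)$ throughout, weighting each SRD by $t^{m}$ so that the $f_n$-recursion refines into the claimed recursion for $\overline q_{n,m}$. I will work with the ``new version'' of SRDs on $\Gamma_n$: a sequence $\pi=0\,\pi_1\cdots\pi_n$ beginning with the unsigned $0$, using each magnitude in $\{0,1,\dots,n\}$ once (barred or not), with $m$ bars among $\{1,\dots,n\}$, and avoiding the two patterns $i(i+1)$ and $\overline{i+1}\,\overline i$; write $\mathcal A_{n,m}$ for this set, so that $|\mathcal A_{n,m}|=\overline q_{n,m}$. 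The single most important tool is the conjugate-reverse involution $\pi\mapsto\pi^{[r]}$: it maps SRDs bijectively to SRDs, and since it negates every entry it turns a configuration with $m$ barred entries among $\{1,\dots,n\}$ into one with $n-m$ barred entries there. This is the only mechanism by which the complementary index $n-m$ can enter, and so I expect it to be responsible for exactly the three terms $n\,\overline q_{n-1,n-m}$, $(m-1)\overline q_{n-2,n-m}$, $(n-m-1)\overline q_{n-2,n-m-1}$ on the right-hand side (call them $D,E,F$).

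First I would isolate the largest magnitude. As in~\cite{ref3}, deleting the entry of magnitude $n$ merges its two neighbours into one new adjacency; since $\pi$ was an SRD, the outcome either is again an SRD on $\{0,\dots,n-1\}$ or contains a \emph{unique} forbidden adjacency. The ``generic'' outcome reduces to $\overline q_{n-1,\cdot}$ and accounts for the size-$(n-1)$ terms $(n-1)\overline q_{n-1,m}$ and $n\,\overline q_{n-1,n-m}$; the coefficients $n-1$ and $n$ are the two counts of admissible gaps into which the magnitude-$n$ entry may be re-inserted, and I expect the two reductions to differ by whether a reversal (a local conjugate-reverse) is applied, the reversal-free branch keeping the index at $m$ (term $A$) and the branch that reverses a block sending it to $n-m$ (term $D$).

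The ``skew'' outcome, where deletion leaves a unique forbidden adjacency, is repaired by a second deletion and reduces to $\overline q_{n-2,\cdot}$, producing the four remaining terms. Here the coefficient records whether the entry merged against the magnitude-$n$ entry is barred or unbarred: the $n-m-1$ unbarred and $m-1$ barred choices should give $(n-m-1)\overline q_{n-2,m}$ and $(m-1)\overline q_{n-2,m-1}$ in the reversal-free branch (terms $B,C$) and, after applying $\pi\mapsto\pi^{[r]}$, the complementary pair $(m-1)\overline q_{n-2,n-m}$ and $(n-m-1)\overline q_{n-2,n-m-1}$ (terms $E,F$); the shift of the bar-index by one in $C$ and $F$ reflects that the repaired entry carries a bar in the relevant frame in those two sub-cases. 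Summing the six terms over $m$ must collapse to $(2n-1)\overline q_{n-1}+(2n-2)\overline q_{n-2}$, i.e.\ the recursion established in the proof of Proposition~\ref{dual}; this is a useful consistency check, and the small tables of $\overline q_{n,m}$ forced by $Q_1^B(t),\dots,Q_4^B(t)$ give a second check while fixing the initial values.

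The hard part will be the skew analysis together with the interaction between the reversal steps and the distinguished leading $0$. Concretely, one must verify that the second deletion repairing the forbidden adjacency is a bijection onto the correct size-$(n-2)$ family with the stated bar-index, and---the genuinely delicate point---that applying $\pi\mapsto\pi^{[r]}$ (which sends ``begins with $0$'' to ``ends with $\overline 0$'') can be reconciled with the magnitude-$n$ deletion so that the reversed branch returns to honest $\mathcal A$-objects at index $n-m$ rather than to a spurious boundary class. Showing that the six contributions are pairwise disjoint and jointly exhaust $\mathcal A_{n,m}$, so that no configuration is double-counted across the reversal-free and reversed branches, is where the bookkeeping of~\cite{ref3} must be generalized most carefully; the coefficient arithmetic is then routine.
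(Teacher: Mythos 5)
You have correctly identified the one tool that can produce the complementary subscripts---the conjugate-reverse $\pi\mapsto\pi^{[r]}$---but your decomposition (delete the magnitude-$n$ entry, reduce, re-insert) cannot deliver them, and the point you defer as ``genuinely delicate'' is in fact the crux of the theorem, not a bookkeeping detail. Deleting the top entry changes the bar count by $0$ or $1$, so that branch of the reduction can only reach $\overline{q}_{n-1,m}$ and $\overline{q}_{n-1,m-1}$; to land on $\overline{q}_{n-1,n-m}$ you would need to complement all $n-1$ bars of the reduced word, and neither available symmetry does this within the class counted by $\overline{q}$: the global conjugate-reverse sends ``begins with unsigned $0$'' to ``ends with $\overline{0}$'' (as you note), and plain bar-toggling sends the condition $\pi_1\neq 1$ to $\pi_1\neq\overline{1}$, which is a different class---indeed $\overline{q}_{1,0}=0\neq 1=\overline{q}_{1,1}$, so $\overline{q}_{n,m}\neq\overline{q}_{n,n-m}$ in general and no cheap patch exists. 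The paper's proof of Theorem~\ref{fnk} resolves exactly this by splitting at the \emph{bottom} element instead: an SRD is either $0A_1 1 A_2$ or $0A_1\overline{1}A_2$; in the first case the blocks are rearranged to $1A_2A_1$ (yielding the $m$-indexed terms), and in the second case the blocks are conjugate-reversed \emph{separately} and reassembled as $1A_1^{[r]}A_2^{[r]}$, which is what legitimately converts $m$ bars into $n-m$ while staying inside the family counted by $\overline{q}$; the $(m-1)$ and $(n-m-1)$ coefficients then come from choosing which barred or unbarred value $a$ gets doubled at the junction. Your plan contains no analogue of this reassembly step, and without it the reversed branch has nowhere valid to land.

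A secondary error: your attribution of the coefficients $n-1$ and $n$ to ``reversal-free'' versus ``reversed'' re-insertion branches is not how the counts actually split. In any insertion-of-$n$ scheme the number of admissible slots depends on the \emph{sign of the magnitude-$(n-1)$ entry} of the reduced word (compare Proposition~\ref{prop:no-tilde}: $n-1$ slots when $n-1$ is present, $n$ when $\overline{n-1}$ is), so the counts are not uniform over the classes $\overline{q}_{n-1,\cdot}$; this is precisely the complication that forces the auxiliary boundary sums of Proposition~\ref{prop:D} and the iteration in the proof of Theorem~\ref{main}. Carried out honestly, your top-deletion route reproduces the Section~\ref{Sec2} recursion, i.e.\ eq.~\eqref{eq:h} with subscripts $m$, $m-1$, $m-2$ over spans $n-1$, $n-2$, $n-3$---a correct but different identity---rather than the six-term complementary-index recursion of Theorem~\ref{fnk}. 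Your consistency check (summing the six terms over $m$ collapses to $(2n-1)\overline{q}_{n-1}+(2n-2)\overline{q}_{n-2}$, matching the proof of Proposition~\ref{dual}) is correct, but it only constrains the answer; it does not supply the missing mechanism.
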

	\begin{proof}
		Note that SRDs of type $1$ and $2$ on $\Gamma_n$ with $m$ bar-elements (counted by $\overline{q}_{n,m}$) are either in the form
		$0 A_1 1 A_2$ or $0 A_1 \overline{1} A_2$. We will count SRDs in each case separately.\\
		\\
		\textbf{\textit{case~$1$: }}$0 A_1 1 A_2$. 
		
		(i) Suppose $A_2=\emptyset$.
		In this case, $A_1$ could essentially (i.e., by appropriate relabelling) be any SRD of length $n-1$ with $m$ bar-elements.
		It is easy to see there are $\overline{q}_{n-1,m} + \overline{q}_{n-2,m}$ such SRDs.
		
		(ii) Suppose $A_2\neq\emptyset$. Consider the induced sequence $1 A_2 A_1$.
		
		If there exists no $a\in[n]$ such that $A_2$ ends with $\overline{a}$ while $A_1$ starts with $\overline{a-1}$ or $A_2$ ends with $a-1$ while $A_1$ starts with $a$, then the sequence $1 A_2 A_1$ could be equivalently any SRD of type $1$ or $2$ of length $n-1$ and with $m$ bar-elements. The latter is counted by $\overline{q}_{n-1,m}$. Moreover, there are $n-2$ ways to transform each such a sequence into sequences of the form $A_1 1 A_2$. Hence, there are $(n-2)\overline{q}_{n-1,m}$ SRDs lying in this situation.
		
		If otherwise, such an $a$ exists, then by construction $a\in[n] \setminus [2]$.
		That is, it is impossible to have patterns $\overline{1} \overline{0}, \, 01, \, \overline{2}\overline{1}, \, 12$ in
		$A_2A_1$ since $1$ has already been used. We claim that for a fixed $a\in[n] \setminus [2]$,
		\begin{itemize}
			\item  the sequences of the form $1 A_2' \overline{a} \overline{a-1} A_1'$ are in one-to-one correspondence to the SRDs on the set $\Gamma_{n-1} \setminus \{0,\overline{0}\}$ (defined analogously) starting with $1$ and having $m-1$ bar-elements which are counted by $\overline{q}_{n-2,m-1}$;
			\item  the sequences of the form $1 A_2' (a-1) a A_1'$ are in one-to-one correspondence to the SRDs on the set $\Gamma_{n-1} \setminus \{0,\overline{0}\}$ starting with $1$ and having $m$ bar-elements which are counted by $\overline{q}_{n-2,m}$.
		\end{itemize}
		The above first case can be seen from replacing $\overline{a}\overline{a-1}$ with $\overline{a-1}$ and decreasing all other elements greater than $a$ (regardless of if it has a bar) by $1$. In particular, this will lose one bar-element.
		The second case can be seen analogously, but without losing a bar-element.
		
		Conversely, for each of the $m-1$ bar-elements in the SRDs on the set $\Gamma_{n-1} \setminus \{0,\overline{0}\}$ starting with $1$, say $\overline{a-1}$ ($a>2$), we first increase all elements no less than $a$ by one, and then replace $\overline{a-1}$ with $\overline{a} \overline{a-1} $.
		Clearly, the resulting sequence is of the form $1 A_2' \overline{a} \overline{a-1} A_1'$.
		In addition, there is a unique way to transform such a sequence into an SRD of the form $0 A_1 1 A_2$, i.e.,
		$0 \overline{a-1} A_1' 1 A_2' \overline{a} $.
		So, there are $(m-1) \overline{q}_{n-2,m-1}$ SRDs lying in this situation.
		Analogously, we find there are $(n-2-m) \overline{q}_{n-2,m}$ SRDs of the form $0 {a} A_1' 1 A_2' {(a-1)} $.

		In summary, for $n\geq {3}$, the number of SRDs of type 1 and type 2 with $m$ bar-elements on $\Gamma_n$ in the form $0 A_1 1 A_2$ is given by 
		$$
		(n-1)\overline{q}_{n-1,m}+(n-m-1)\overline{q}_{n-2,m}+(m-1)\overline{q}_{n-2,m-1}.
		$$
		\textbf{\textit{case~$2$:} }$0 A_1 \overline{1} A_2$. Consider the induced sequence $1 A_1^{[r]} A_2^{[r]}$ first (Recall $A_i^{[r]}$ denotes the conjugate-reverse of $A_i$). Apparently, there are $n-m$ bar-elements in $A_1^{[r]} A_2^{[r]}$.
		
		(i) Suppose $A_1^{[r]}=\emptyset$.
		
		In this scenario, $A_2^{[r]}$ could essentially be any SRD of length $n-1$ with $n-m$ bar-elements
		the number of which is given by $\overline{q}_{n-1,n-m}+\bar{q}_{n-2,n-m}$.
		
		(ii) Suppose $A_1^{[r]}\neq\emptyset$.
		
		When $A_2^{[r]}=\emptyset$, $1 A_1^{[r]}$ is the conjugate-reverse of $A_1 \overline{1}$ thus is
		an SRD of length $n-1$. Consequently, the number of SRDs in this case is $\overline{q}_{n-1,n-m}$.
		
		Suppose $A_2^{[r]}\neq\emptyset$.
		Similar to case~$1$ (ii), there are $(n-2)\overline{q}_{n-1,n-m}$ SRDs
		where there is no $a\in[n]$ such that $A_1^{[r]}$ ends with $\overline{a}$ while $A_2^{[r]}$ starts with $\overline{a-1}$ or $A_1^{[r]}$ ends with $a-1$ while $A_2^{[r]}$ starts with $a$.
		Suppose otherwise such an $a$ exists.
		For a fixed $a\in[n]/[2]$, similar to the discussion in case~$1$ (ii), we claim that 
		\begin{itemize}
			\item  the sequences of the form $1 A_1^{[r]}{'} \overline{a} \overline{a-1} A_2^{[r]}{'}$  are in one-to-one correspondence to the SRDs on the set $\Gamma_{n-1} \setminus \{0,\overline{0}\}$ starting with $1$ and having $n-m-1$ bar-elements which are counted by $(n-m-1)\overline{q}_{n-2,n-m-1}$;
			\item  the sequences of the form $1 A_1^{[r]}{'} (a-1) a A_2^{[r]}{'}$ are in one-to-one correspondence to the SRDs on the set $\Gamma_{n-1} \setminus \{0,\overline{0}\}$ starting with $1$ and having $n-m$ bar-elements which are counted by $(m-2)\overline{q}_{n-2,n-m}$.
		\end{itemize}
		
		In summary, for $n\geq {3}$, the number of SRDs of type 1 and type 2 with $m$ bar-elements  on $\Gamma_n$ in the form $0 A_1 \overline{1} A_2$ is given by 
		$$
		n\overline{q}_{n-1,n-m}+(m-1)\overline{q}_{n-2,n-m}+(n-m-1)\overline{q}_{n-2,n-m-1}.
		$$
		Combining the above two cases together, the theorem follows.
	\end{proof}
	
	Applying Theorem~\ref{fnk}, we have
	{
		\begin{align*}
			&q_{n,m}
			=\overline{q}_{n,m}+\overline{q}_{n-1,m}\\
			=&(n-1){q}_{n-1,m}+(m-1){q}_{n-1,m-1}+(n-m-2){q}_{n-2,m}+(2m-2){q}_{n-2,m-1}\\
			&+(n-m+1)\overline{q}_{n-1,n-m}+(2n-2m-1)\overline{q}_{n-2,n-m-1}+(n-m-2)\overline{q}_{n-3,n-m-2},
		\end{align*}
	}
	and 	
		\begin{align*}
			&q_{n-1,m-1}
			=\overline{q}_{n-1,m-1}+\overline{q}_{n-2,m-1}\\
			=&(n-2){q}_{n-2,m-1}+(m-2){q}_{n-2,m-2}+(n-m-2){q}_{n-3,m-1}+(2m-4){q}_{n-3,m-2}\\
			&+(n-m+1)\overline{q}_{n-2,n-m}+(2n-2m-1)\overline{q}_{n-3,n-m-1}+(n-m-2)\overline{q}_{n-4,n-m-2}.
		\end{align*}		
	Summing up the above two equations, we can collect terms to clear all numbers of the form $\overline{q}_{x,y}$ and arrive at
	\begin{align*}
		q_{n,m}+q_{n-1,m-1}
		=&nq_{n-1,m-1}+(n-1)q_{n-1,m}+(m-2)q_{n-2,m-2}+(3n-5)q_{n-2,m-1}\\
		&+(n-m-2)q_{n-2,m}+(2m-4)q_{n-3,m-2}+(2n-2m-4)q_{n-3,m-1}.
	\end{align*}	
	Moving $q_{n-1,m-1}$ to the right-hand side, we obtain eq.~\eqref{eq:h} as desired.
	
	Is it true that there will be more signed relative derangements if we turn more unsigned elements into
	signed elements? Put it differently, is it easier to form a relative derangement if more elements have signs?
	The answer is apparently negative due to the symmetry of $q_{n,m}$.
	But, how about the cases for $m \leq n/2$?
	This is related to the unimodality of sequences.
	The sequence $x_0,x_1,x_2,\cdots,x_n$ is said to be unimodal if there exists an index $0\leq m\leq n$, called the mode of the sequence, such that $x_0\leq \cdots\leq x_{m-1}\leq x_m\geq x_{m+1}\geq\cdots\geq x_n$.
	A common and well understood approach for proving the unimodality of the sequence consisting of the coefficients of a polynomial
	is to show the roots of the polynomial are all real.
	However, this approach fails for $Q_n^B(t)$'s since some polynomials may have non-real roots.
	For instance, $Q_5^B(t)$ has only one real root $-1$, and other complex roots are approximately {$-2.5192\pm0.1281i$, $-0.3959\pm0.0201i$}.
	
	\begin{theorem}
		For any fixed $n\geq 1$, the sequence $q_{n,0}, q_{n,1}, \ldots, q_{n,n}$ is unimodal.
	\end{theorem}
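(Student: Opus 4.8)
The plan is to prove the stronger statement that each $Q_n^B(t)$ is palindromic and unimodal (PU), i.e.\ that its coefficient sequence is both symmetric and unimodal, by induction on $n$ using the recursion of Theorem~\ref{main}. Since $q_{n,m}=q_{n,n-m}$, the sequence is symmetric about $n/2$, so unimodality is equivalent to the one-sided chain $q_{n,0}\le q_{n,1}\le\cdots\le q_{n,\lfloor n/2\rfloor}$; this is all I would aim to establish. The base cases $n\le 2$ (plus a couple more to prime the induction) are read off directly from the explicit polynomials $Q_1^B,\dots,Q_5^B$ listed after Theorem~\ref{main}.

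For the inductive step I would split the recursion as $Q_n^B(t)=(n-1)(1+t)Q_{n-1}^B(t)+g(t)$, where $g$ collects the $Q_{n-2}^B$- and $Q_{n-3}^B$-contributions, so that $g_k=q_{n,k}-(n-1)(q_{n-1,k}+q_{n-1,k-1})$. By the inductive hypothesis $Q_{n-1}^B$ is PU, and since $1+t$ is PU with centre $1/2$, the first summand $(n-1)(1+t)Q_{n-1}^B$ is PU with centre $n/2$ (using the standard fact that a product of palindromic unimodal polynomials with nonnegative coefficients is palindromic unimodal). A short computation using $q_{n-2,i}=q_{n-2,n-2-i}$ and $q_{n-3,i}=q_{n-3,n-3-i}$ shows that $g$ is itself palindromic with centre $n/2$, so it suffices to prove that $g$ is unimodal. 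Writing $d_j=q_{n-2,j}-q_{n-2,j-1}$ and $e_j=q_{n-3,j}-q_{n-3,j-1}$, telescoping yields the key identity
\begin{align*}
g_k-g_{k-1}=(k-3)d_{k-2}+(3n-6)d_{k-1}+(n-2-k)d_k+(2k-6)e_{k-2}+(2n-2k-4)e_{k-1}.
\end{align*}
By the inductive hypothesis $(q_{n-2,j})_j$ and $(q_{n-3,j})_j$ are unimodal, so $d_j\ge 0$ and $e_j\ge 0$ on their increasing ranges; hence for $3\le k\le\lfloor n/2\rfloor$ every coefficient above is nonnegative on the relevant range and $g_k-g_{k-1}\ge 0$ follows at once.

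The main obstacle is the handful of boundary indices. For small $k$ (namely $k=2$, since $k=1$ is trivially nonnegative) the coefficients $k-3$ and $2k-6$ are negative, and the naive estimates $d_{k-2},e_{k-2}\ge 0$ together with mere unimodality of the lower sequences fall short of nonnegativity by a small amount, on the order of $q_{n-2,0}+2q_{n-3,0}$. A symmetric difficulty arises at the central index, where, depending on the parity of $n$, some $d_k$ or $e_k$ becomes $\le 0$ as one crosses the peak of the lower sequence. I expect to resolve both by not discarding the first summand: its contribution to $q_{n,k}-q_{n,k-1}$ equals $(n-1)(q_{n-1,k}-q_{n-1,k-2})$, which is strictly positive and comparatively large precisely at these indices, and should absorb the deficit in $g_k-g_{k-1}$. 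Making this quantitative, i.e.\ lower-bounding the gaps $q_{n-1,k}-q_{n-1,k-2}$ and $d_{k-1}$ sharply enough to dominate $q_{n-2,0}+2q_{n-3,0}$, will likely require strengthening the induction hypothesis to carry an explicit lower bound on the first few forward differences (for instance an inequality of the form $q_{n,1}-q_{n,0}\ge c_n\,q_{n,0}$) rather than the bare PU property; establishing such auxiliary inequalities, again by induction on the recursion~\eqref{eq:h}, is where the real work lies.
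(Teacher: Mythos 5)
Your key identity is correct, and in fact it is exactly the paper's computation in disguise: your telescoped expression for $g_k-g_{k-1}$, added to the contribution $(n-1)(q_{n-1,k}-q_{n-1,k-2})$ of the summand $(n-1)(1+t)Q_{n-1}^B(t)$, reproduces term-for-term the paper's expansion of $P(N+1,m)=q_{N+1,m}-q_{N+1,m-1}$ in eq.~\eqref{eq:j} (with $n=N+1$, $k=m$), so for $3\le k\le (n-2)/2$ your argument closes just as the paper's does. The genuine gap lies at precisely the two boundary regimes you flag, and flagging them is not closing them. At $k=2$ the deficit $-q_{n-2,0}-2q_{n-3,0}$ cannot be recovered from bare unimodality of the lower rows, and your proposed remedy---a strengthened induction hypothesis carrying quantitative lower bounds such as $q_{n,1}-q_{n,0}\ge c_n\,q_{n,0}$---is neither formulated precisely nor proved; as written the induction simply does not go through, and you concede as much. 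Note that the paper avoids quantitative estimates here altogether: it proves $P(n,1)\ge 0$ for all $n\ge 2$ and $P(n,2)\ge 0$ for all $n\ge 4$ by explicit injections (bar the element $n$ to map $\mathbb{Q}_n$ into the one-bar class; then a three-case analysis on the largest bar-element injects the one-bar class into the two-bar class), and runs the inductive step only for $m\ge 3$. Some such separate device for small $m$, valid for \emph{all} $n$ and not just finitely many base cases, is the missing ingredient in your plan.

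At the central indices your diagnosis is right ($d_k\le 0$ once $k$ passes the peak $(n-2)/2$ of the row $n-2$), but the mechanism you propose---absorption by the first summand, again requiring unproven quantitative bounds---is not the one that works, nor is it needed. The fix used in the paper (case (ii), eqs.~\eqref{eq:k} and~\eqref{eq:l}) is purely structural: apply the symmetry $q_{x,y}=q_{x,x-y}$ to the offending terms and regroup. For instance, when $n$ is even and $k=n/2$, symmetry gives $d_k=-d_{k-1}$, so the term $(n-2-k)d_k$ folds into $(3n-6)d_{k-1}$, leaving the nonnegative coefficient $\tfrac{5n-8}{2}$ on $d_{k-1}$; after such regrouping every remaining difference has its lower index at most half its upper index, and the \emph{bare} inductive hypothesis suffices---no strengthening of the hypothesis is required anywhere. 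Two smaller points: the regrouped coefficients (of the shape $N-5$, $\tfrac{N-6}{2}$ in the paper's notation) are only nonnegative for $n$ large enough, so you need base cases through $n=9$ rather than the $n\le 5$ you suggest; and your reduction of unimodality of $Q_n^B$ to unimodality of the palindromic remainder $g$ is sound but superfluous once one works directly with $P(n,m)$, since your identity plus the first-summand term is already the full difference. In summary: same route as the paper, correct central identity, but the two boundary regimes---which constitute essentially all of the paper's actual work on this theorem---are left open in your proposal.
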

	
	\begin{proof}
		Thanks to the symmetry of $q_{n,m}$, it suffices to prove $P(n,m)=q_{n,m}-q_{n,m-1} \geq 0$ for $m\leq n/2$,
		where we still make the convention $q_{n,m}=0$ if $m<0$.
		We shall prove this mainly by induction. 
		
		First, from the polynomials of $Q_n^B(t)$ listed in the last section, we observe that
		for $n=1,2,\ldots, 9$ and $m\leq n/2$, $P(n,m)\geq 0$.
		Secondly, we claim 
		\begin{itemize}
			\item for any $n\geq 2$,
			$P(n,1)\geq 0$;
			\item for any $n\geq 4$,
			$P(n,2) \geq 0$.
		\end{itemize}

		In order for proving $P(n,1)\geq 0$ in the case of $n\geq 2$, we construct an injection from $\mathbb{Q}_n$ to $\mathbb{Q}_{n,1}^B$ (where $\mathbb{Q}_{n,i}^B$ denotes the subset containing signed relative derangements with exactly $i$ bar-elements). For each sequence in $\mathbb{Q}_n$, replacing $n$ with $\overline{n}$, we obtain a unique sequence in $\mathbb{Q}_{n,1}^B$. Obviously, this is an injection and then $P(n,1)\geq0$ follows.
		
		Analogously, we construct an injection from $\mathbb{Q}_{n,1}^B$ to $\mathbb{Q}_{n,2}^B$ for proving $P(n,2) \geq 0$.
		We will classify the sequences in $\mathbb{Q}_{n,1}^B$ by the largest bar-element. 
		
		\textit{\textbf{case~$1$:}} If the largest bar-element in $\pi \in \mathbb{Q}_{n,1}^B$ is less than $n-1$, then we map $\pi$ to a relative derangement obtained by substituting $\overline{n}$ for $n$. In this case, the obtained relative derangements in $\mathbb{Q}_{n,2}^B$ have two bar-elements: $\overline{n}$ and $\overline {i}$ for some $1\leq i <n-1$.
		
		\textit{\textbf{case~$2$:}} If the largest bar-element in $\pi \in \mathbb{Q}_{n,1}^B$ is exactly $n-1$, and $\overline{n-1}$ is not followed by $n$, then we substitute $\overline{n}$ for $n$. In the case that $\overline{n-1}$ is followed by $n$, we replace $1$ with $\overline{1}$ to obtain a sequence in $\mathbb{Q}_{n,2}^B$. In this case, the obtained relative derangements in $\mathbb{Q}_{n,2}^B$ have two bar-elements: either $\overline{n-1}$ and $\overline{n}$, or
		$\overline{n-1}$ and $\overline{1}$ with an additional feature that $\overline{n-1}$ is followed by $n$.
		
		\textit{\textbf{case~$3$:}} Suppose the largest bar-element in $\pi \in \mathbb{Q}_{n,1}^B$ is $n$. If $n-1$ is not followed by $\overline{n}$, then we remove the bar of $n$. Meanwhile, we replace $n-1$ with $\overline{n-1}$ and $1$ with $\overline{1}$. If $\overline{n}$ follows $n-1$, then we simply replace $1$ with $\overline{1}$.
		In this case, the obtained relative derangements in $\mathbb{Q}_{n,2}^B$ have two bar-elements: either $\overline{n-1}$ and $\overline{1}$ with an additional feature that $\overline{n-1}$ is not followed by $n$, or
		$\overline{n}$ and $\overline{1}$ with the feature that $\overline{n}$ follows $n-1$.
		
		In the above mapping procedure, signed relative derangements in $\mathbb{Q}_{n,1}^B$ lying in the same case are clearly mapped to
		distinct signed relative derangements in $\mathbb{Q}_{n,2}^B$. Moreover, inspecting the patterns of the contained two bar-elements and the additional features,
		signed relative derangements from different cases are mapped to distinct signed relative derangements in $\mathbb{Q}_{n,2}^B$ (for $n\geq 4$) as well.
		Therefore, the above map is indeed an injection. Hence, $P(n,2)\geq 0$.
		
		Now suppose for $1\leq n\leq N$ and any $0 \leq m \leq n/2$,  $P(n,m) \geq 0$.
		Next, we shall show that $P(N+1,m)\geq 0$ for any $3 \leq m \leq (N+1)/2$.
		Applying Corollary~\ref{coro}, we first have
		\begin{align}
			&P(N+1,m)
			=q_{N+1,m}-q_{N+1,m-1}\nonumber\\
			=&N(q_{N,m}-q_{N,m-2})+(N-m-1)(q_{N-1,m}-q_{N-1,m-1})\nonumber\\
			&+3(N-1)(q_{N-1,m-1}-q_{N-1,m-2})+(m-3)(q_{N-1,m-2}-q_{N-1,m-3})\nonumber\\
			&+2(N-m-1)(q_{N-2,m-1}-q_{N-2,m-2})+2(m-3)(q_{N-2,m-2}-q_{N-2,m-3})\label{eq:j}.
		\end{align}
		We proceed to distinguish two cases.\\
		(i) If $3\leq m\leq(N-1)/2$, we compare the two subscripts of each term $q_{x,y}$ on the RHS of eq.~\eqref{eq:j} and
		find that $y\leq x/2$. For instance, since the maximum value of $m$ here is $(N-1)/2$, as to $q_{N-1,m-2}$, we have $m-2=(N-5)/2$ which satisfies $m-2\leq(N-1)/2$. Consequently, $q_{N-1,m-2}-q_{N-1,m-3} \geq 0$ by assumption.
		Other summands are nonnegative by the same token. Therefore, $P(N+1,m)\geq0$ follows.\\
		(ii) If $N/2\leq m\leq(N+1)/2$, $m$ equals either $N/2$ or $(N+1)/2$ since $m\in\mathbb{N}$. We check the two subscripts of $q_{x,y}$ and find that $y>x/2$ in some cases. Therefore, in the following reasoning, we will make some transformation by 
		the symmetry of $q_{n,m}$.
		
		When $m=N/2$, we replace $q_{N-1,m}$ with $q_{N-1,N-m-1}$ and regroup the terms on the RHS of eq.~\eqref{eq:j}, and obtain
		\begin{align}
			P(N+1,m)\label{eq:k}
			=&N(q_{N,\frac{N}{2}}-q_{N,\frac{N-4}{2}})+\frac{N-2}{2}(q_{N-1,\frac{N}{2}}-q_{N-1,\frac{N-2}{2}})\nonumber\\
			&+3(N-1)(q_{N-1,\frac{N-2}{2}}-q_{N-1,\frac{N-4}{2}})+\frac{N-6}{2}(q_{N-1,\frac{N-4}{2}}-q_{N-1,\frac{N-6}{2}})\nonumber\\
			&+(N-2)(q_{N-2,\frac{N-2}{2}}-q_{N-2,\frac{N-4}{2}})+(N-6)(q_{N-2,\frac{N-4}{2}}-q_{N-2,\frac{N-6}{2}})\nonumber\\
			=&N(q_{N,\frac{N}{2}}-q_{N,\frac{N-4}{2}})+\frac{N-2}{2}(q_{N-1,\frac{N-2}{2}}-q_{N-1,\frac{N-2}{2}})\nonumber\\
			&+3(N-1)(q_{N-1,\frac{N-2}{2}}-q_{N-1,\frac{N-4}{2}})+\frac{N-6}{2}(q_{N-1,\frac{N-4}{2}}-q_{N-1,\frac{N-6}{2}})\nonumber\\
			&+(N-2)(q_{N-2,\frac{N-2}{2}}-q_{N-2,\frac{N-4}{2}})+(N-6)(q_{N-2,\frac{N-4}{2}}-q_{N-2,\frac{N-6}{2}})
		\end{align}
		Similarly, when $m=(N+1)/2$, we replace $q_{N,m}$ with $q_{N,N-m}$, $q_{N-1,m}$ with $q_{N-1,N-m-1}$ and $q_{N-2,m-1}$ with $q_{N-2,N-m-1}$ in eq.~\eqref{eq:j} and regroup the terms to have
		\begin{align}
			P(N+1,m)\label{eq:l}
			=&N(q_{N,\frac{N+1}{2}}-q_{N,\frac{N-3}{2}})+\frac{N-3}{2}(q_{N-1,\frac{N+1}{2}}-q_{N-1,\frac{N-1}{2}})\nonumber\\
			&+(3N-3)(q_{N-1,\frac{N-1}{2}}-q_{N-1,\frac{N-3}{2}})+\frac{N-5}{2}(q_{N-1,\frac{N-3}{2}}-q_{N-1,\frac{N-5}{2}})\nonumber\\
			&+(N-5)(q_{N-2,\frac{N-3}{2}}-q_{N-2,\frac{N-5}{2}})+(N-3)(q_{N-2,\frac{N-1}{2}}-q_{N-2,\frac{N-3}{2}})\nonumber\\
			=&N(q_{N,\frac{N-1}{2}}-q_{N,\frac{N-3}{2}})+\frac{N-3}{2}(q_{N-1,\frac{N-3}{2}}-q_{N-1,\frac{N-1}{2}})\nonumber\\
			&+(3N-3)(q_{N-1,\frac{N-1}{2}}-q_{N-1,\frac{N-3}{2}})+\frac{N-5}{2}(q_{N-1,\frac{N-3}{2}}-q_{N-1,\frac{N-5}{2}})\nonumber\\
			&+(N-5)(q_{N-2,\frac{N-3}{2}}-q_{N-2,\frac{N-5}{2}})+(N-3)(q_{N-2,\frac{N-3}{2}}-q_{N-2,\frac{N-3}{2}})\nonumber\\
			=&N(q_{N,\frac{N-1}{2}}-q_{N,\frac{N-3}{2}})+\frac{5N-3}{2}(q_{N-1,\frac{N-1}{2}}-q_{N-1,\frac{N-3}{2}})\nonumber\\
			&+\frac{N-5}{2}(q_{N-1,\frac{N-3}{2}}-q_{N-1,\frac{N-5}{2}})+(N-5)(q_{N-2,\frac{N-3}{2}}-q_{N-2,\frac{N-5}{2}}).
		\end{align}
		Inspecting term by term on the RHS of eq.~\eqref{eq:k} and eq.~\eqref{eq:l}, they are all nonnegative by assumption. Therefore, $P(N+1,m)\geq 0$. 
		This completes the proof of the theorem.
	\end{proof}
	
It would be interesting to provide a pure combinatorial proof for that $P(n,m) \geq 0$ for $m\leq n/2$.
Unfortunately, we are unable to achieve that at the moment.
	
	\section*{Disclosure statement}
	The authors report there are no competing interests to declare.
	
	\section*{Acknowledgements}
	
	The authors would like to thank Prof.~Yi Wang for pointing out that the roots of $Q_n^B(t)$'s are not
	necessarily all real.
	
	\appendix
	\section{Proof of Corollary~\ref{cor:dif-eq}}
	In the following, we write $\frac{\partial F}{\partial x}(x,t)$ as $F_x(x,t)$ and $\frac{\partial F}{\partial t}(x,t)$ as $F_t(x,t)$.\vspace{0.8ex} Then according to the definition of $F(x,t)$, we first have 
	$$
	F_x(x,t)=\sum\limits_{n\geq 1}nQ_n^B(t)x^{n-1},\ F_t(x,t)=\sum\limits_{n\geq 1}	{Q_n^B}'(t)x^n.
	$$
	For the terms on right-hand side of eq.~\eqref{eq:qnt-recur}, multiplying by $x^n$ and summing over $n\geq 3$, we respectively obtain	
	\begin{align*}
		\sum_{n \geq 3}(n-1)tQ_{n-1}^B(t)x^n
		&=tx^2\sum_{n \geq 3}(n-1)Q_{n-1}^B(t)x^{n-2}\\
		&=tx^2(F_x(x,t)-Q_1^B(t))\\
		\sum_{n\geq 3}(n-1)Q_{n-1}^B(t)x^n
		&=x^2\sum_{n \geq 3}(n-1)Q_{n-1}^B(t)x^{n-2}\\
		&=x^2(F_x(x,t)-Q_1^B(t))\\
		\sum_{n \geq 3}(t^3-t){Q_{n-2}^B}'(t)x^n
		&=x^2(t^3-t)\sum_{n \geq 3}{Q_{n-2}^B}'(t)x^{n-2}\\
		&=x^2(t^3-t)F_t(x,t)
	\end{align*}
	\begin{align*}
		\sum_{n\geq 3}(3n-5)tQ_{n-2}^B(t)x^n
		&=t\big{[}\sum_{n \geq 3}3nQ_{n-2}^B(t)x^n-5\sum_{n \geq 3}Q_{n-2}^B(t)x^n\big{]}\\
		&=t\big{[}\sum_{n \geq 3}3(n-2+2)Q_{n-2}^B(t)x^n-5\sum_{n \geq 3}Q_{n-2}^B(t)x^n\big{]}\\
		&=t\big{[}3\sum_{n \geq 3}(n-2)Q_{n-2}^B(t)x^n+6\sum_{n \geq 3}Q_{n-2}^B(t)x^n-5\sum_{n \geq 3}Q_{n-2}^Bx^n\big{]}\\
		&=t\big{[}3x^3\sum_{n \geq 3}(n-2)Q_{n-2}^B(t)x^{n-3}+x^2\sum_{n \geq 3}Q_{n-2}^B(t)x^{n-2}\big{]}\\
		&=t\big{[}3x^3F_x(x,t)+x^2F(x,t)\big{]}
	\end{align*}
	\begin{align*}
		\sum_{n \geq 3}(n-2)Q_{n-2}^B(t)x^n
		&=x^3\sum_{n \geq 3}(n-2)Q_{n-2}^B(t)x^{n-3}=x^3F_x(x,t)\\
		\sum_{n \geq 3}(2t^3-2t^2){Q_{n-3}^B}'(t)x^n
		&=(2t^3-2t^2)x^3\sum_{n \geq 3}{Q_{n-3}^B}'(t)x^{n-3}\\
		&=(2t^3-2t^2)x^3F_t(x,t)
	\end{align*}
	\begin{align*}
		\sum_{n \geq 3}(2n-6)tQ_{n-3}^B(t)x^n
		&=t\big{[}\sum_{n \geq 3}2nQ_{n-3}^B(t)x^n-6\sum_{n \geq 3}Q_{n-3}^B(t)x^n\big{]}\\
		&=t\big{[}2\sum_{n \geq 3}(n-3+3)Q_{n-3}^B(t)x^n-6\sum_{n \geq 3}Q_{n-3}^B(t)x^n\big{]}\\
		&=t\big{[}2\sum_{n \geq 3}(n-3)Q_{n-3}^B(t)x^n\big{]}\\
		&=t\big{[}2x^4\sum_{n \geq 3}(n-3)Q_{n-3}^B(t)x^{n-4}\big{]}\\
		&=2tx^4F_x(x,t)
	\end{align*}
	According to the computation above, for $n\geq3$, we have
	\begin{align*}
		\sum_{n \geq 3}Q_n^B(t)x^n
		=& tx^2(F_x(x,t)-Q_1^B(t))+x^2(F_x(x,t)-Q_1^B(t))+x^2(t^3-t)F_t(x,t)\\
		&+t\big[3x^3F_x(x,t)+x^2F(x,t)\big]+x^3F_x(x,t)\\
		&+(2t^3-2t^2)x^3F_t(x,t)+2tx^4F_x(x,t)\\
		=&\big{[}(t+1)x^2+(3t+1)x^3+2tx^4\big{]}F_x(x,t)\\
		&+\big{[}(t^3-t)x^2+(2t^3-2t^2)x^3\big{]}F_t(x,t)
		+tx^2F(x,t)-(t+1)^2x^2.
	\end{align*}
	Then, $F(x,t)$ is given as follows:
	\begin{align*}
		F(x,t)=&Q_1^B(t)x+Q_2^B(t)x^2+	\sum\limits_{n\geq 3}Q_n^B(t)x^n\\
		=&x+tx+t^2x^2+4tx^2+x^2+\big{[}(t+1)x^2+(3t+1)x^3+2tx^4\big{]}F_x(x,t)\\
		&+\big{[}(t^3-t)x^2+(2t^3-2t^2)x^3\big{]}F_t(x,t)+tx^2F(x,t)-(t+1)^2x^2\\
		=&\big{[}(t+1)x^2+(3t+1)x^3+2tx^4\big{]}F_x(x,t)\\
		&+\big{[}(t^3-t)x^2+(2t^3-2t^2)x^3\big{]}F_t(x,t)
		+tx^2F(x,t)+(t+1)x+2tx^2.
	\end{align*}
	After sorting out the above equations, we eventually obtain
	\begin{align*}
		&F_t(x,t)+\frac{t+1+(3t+1)x+2tx^2}{t(t^2-1)+2t^2(t-1)x}F_x(x,t)+\frac{tx^2-1}{t(t^2-1)x^2+2t^2(t-1)x^3}F(x,t)\\
		&=\frac{-1-t-2tx}{t(t^2-1)x+2t^2(t-1)x^2},
	\end{align*}
	completing the proof of Corollary~\ref{cor:dif-eq}.
	

\end{document}